\documentclass[a4paper,english,reqno]{amsart}

\usepackage{amssymb,amsthm, amsmath,amsfonts}
\usepackage{bbm}
\usepackage{graphicx}
\graphicspath{{figs/}}
\usepackage{subfigure}
\usepackage{microtype}
\usepackage{booktabs}
\usepackage{enumerate}
\usepackage{pgf}
\usepackage[latin1]{inputenc}
\usepackage{verbatim}
\usepackage{cite}
\usepackage{url}
\usepackage{xcolor}

\usepackage[cmtip,all]{xy}
\xyoption{arc}

\usepackage{pgf,tikz}
\usetikzlibrary{arrows,shapes,calc,positioning}
\usepackage{manfnt}
\usepackage{tikz-cd}
\usepackage{url}
\usepackage{microtype}

\usepackage[font=footnotesize]{caption}

\usepackage{xcolor}
\usepackage{hyperref}

\DeclareMathOperator{\reals}{\mathbb{R}}

\newenvironment{myproof}[2] {\emph{Proof of {#1} {#2}.}}{\hfill$\square$}

\newcommand{\R}{\mathbb{R}}

\renewcommand{\H}{\mathbb{H}}

\newcommand{\N}{\mathbb{N}} 

\newtheorem{theorem}{Theorem}
\newtheorem{lemma}{Lemma}
\newtheorem{proposition}{Proposition}
\newtheorem{corollary}{Corollary}

\theoremstyle{definition}
\newtheorem{definition}[theorem]{Definition}

\newtheorem*{theoremA*}{Main Theorem}
\newtheorem*{theoremB*}{Main Theorem for a Flute}

\numberwithin{equation}{section}

\begin{document}
	
	\title[On the non-expansiveness of the geodesic flow]{On the non-expansiveness of the geodesic flow on surfaces with cusps}
	
	
	%
	

	
	\author[ S. Burniol]{ Sergi Burniol Clotet}	
	\address{IMERL, CMAT, Universidad de la República, IFUMI, IRL CNRS 2030, Uruguay}
	
	\email{sergi.burniol@gmail.com}
	
		\author[F. Dal'Bo]{Françoise Dal'Bo}	
	\address{IRMAR, Universit\'e de Rennes, France and IFUMI, IRL CNRS 2030
		}
	\email{francoise.dalbo@univ-rennes.fr}
	
	\subjclass[2020]{Primary 37D40; Secondary 37D20.}
	
	\date{23 March 2026}

	\dedicatory{}
	
	
	\begin{abstract} 
		
		We exhibit orbits of the geodesic flow on a hyperbolic surface with at least one cusp such that every tubular neighborhood contains uncountably many distinct geodesic flow orbits. The proof relies on new phenomena, namely the existence of strong stable sets in the dynamical sense that do not coincide with the stable horocycles. When the surface has finite volume, this phenomenon is typical.
	\end{abstract}
	
	\maketitle

	\section{Introduction}
	
	The notion of expansiveness for flows on compact metric spaces was introduced in the 1970s \cite{BowenWalters1972} (see also \cite{KatokHasselblatt1995} for a slightly different formulation). Its origins are closely tied to the study of hyperbolic dynamical systems. In particular, it is well known that Anosov flows on compact manifolds are expansive \cite{Anosov1}. A classical example of an Anosov flow--and hence of an expansive flow--is the geodesic flow on a compact Riemannian manifold of negative curvature. The goal of this paper is to investigate the noncompact setting.


We consider the geodesic flow $g_t$ on the unit tangent bundle $T^1S$ of a hyperbolic surface $S$ with at least one cusp. For instance, $S$ may be a noncompact surface of finite volume. We endow $T^1S$ with the Sasaki distance $d_{Sa}$ induced by the hyperbolic metric of $S$.

We say that a vector $u\in T^1S$ is \emph{cusp-recurrent} if the associated geodesic ray $u(\reals^+)$ in $S$ does not diverge through a cusp and intersects closed horocycles of arbitrarily small length. Our main result shows that, in contrast with the compact setting, expansiveness fails in an unexpected way.

\begin{theorem}\label{thm1}
	Let $S$ be a nonelementary hyperbolic surface with at least one cusp. Let $u\in T^1S$ be cusp-recurrent. Then for every  $\delta>0$ there exist uncountably many vectors $v\in T^1S$ with distinct $g_t$-orbits such that 
	\[
	d_{Sa}(g_t(u),g_t (v)) <\delta, \quad \forall t \in \reals.
	\]
\end{theorem}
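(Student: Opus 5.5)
The plan is to build the vectors $v$ explicitly in the upper half-plane, using the fact that near a cusp the closed horocycles are very short. Two points of a deep cusp region at the same height are therefore close in $S$ even when their lifts are far apart horizontally.

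\emph{Normalisation and excursions.} Lift to $\H$ and conjugate so that one cusp visited by $u(\reals^+)$ infinitely often deeply is at $\infty$, with stabiliser generated by $z\mapsto z+1$. Fix a height $h_0$ beyond which the horoball $\{\Im z>h_0\}$ embeds in $S$. Cusp-recurrence gives a sequence of excursions of $u(\reals^+)$ into this cusp. After passing to a subsequence, the $i$-th excursion is a sub-arc of a lift which is a Euclidean semicircle of radius $R_i\to\infty$, entering $\{\Im z>h\}$ at time $s_i$ and leaving at time $s_i'$, with $s_i'-s_i\approx 2\log(R_i/h)$. Since $u$ does not diverge through the cusp, these excursions are separated by returns to a compact part, and we may take the $R_i$ growing as fast as we like, say $\sum_i 1/R_i$ very small.

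\emph{Elementary move.} Keep the entry point $x_i+ih$ of the $i$-th excursion fixed. Replace the semicircle of centre $c_i$ and radius $R_i$ by the semicircle through the same entry point with centre $c_i+\tfrac12$; its radius $R_i'$ satisfies $R_i'-R_i=O(1)$. The new arc exits at height $h$ at a point that differs from the old exit by a horizontal shift of $1+O(h^2/R_i)$, that is, by a deck translation plus an error $O(h^2/R_i)$. The entry and exit directions change by $O(h/R_i^2)$.

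Inside the horoball, the two arcs stay at heights whose logarithms differ by $O(1/R_i)$ after reparametrising by a time shift of size $O(1/R_i)$. Two points of $\{\Im z=y\}$ are at distance at most $1/(2y)$ in $S$. Hence the Sasaki distance between the original and modified excursions, viewed in $T^1S$, stays below $C\,(1/h+1/R_i)$ throughout. Fixing $h$ large with $C/h<\delta/4$ makes the move cost less than $\delta/2$ in the cusp and only $O(1/R_i)$ at exit.

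\emph{Construction of $v$.} Fix a binary sequence $\epsilon=(\epsilon_i)\in\{0,1\}^{\N}$. Let $v_\epsilon$ be the vector whose geodesic has the same backward endpoint as $u$, so the negative half-orbits converge exponentially. Its forward endpoint is defined as a limit: at each excursion with $\epsilon_i=1$, perform the elementary move, then continue along the geodesic that, in $S$, has the same exit data as $u$ up to the error $O(1/R_i)$. Concretely, the forward endpoint is the limit of the endpoints obtained after $n$ moves; these form a Cauchy sequence because each move perturbs the endpoint by a quantity controlled by $1/R_i$.

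Between excursions the geodesic runs in a compact part, where hyperbolicity controls the accumulated perturbation. An error at exit of size $\eta$ in the unstable direction grows by $e^{t}$ until the next deep excursion. We therefore choose the subsequence so that $R_{i+1}$ is large compared with $e^{s_{i+1}-s_i'}$ times the previous errors. We also reset: the elementary move is adjusted by a small choice of centre so that the entry point of the next excursion matches that of $u$ modulo $\Gamma$. This re-matching, which is an implicit-function or contraction argument at each stage, is what I expect to be the main obstacle. It requires that the freedom in the centre, of order $O(1)$ in $\H$ but $O(1/R_i)$ in effect, be enough to absorb the unstable error $\eta e^{t}$ accumulated in the compact part. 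The growth condition on the $R_i$ is designed exactly for this bookkeeping. With it, $d_{Sa}(g_tu,g_tv_\epsilon)<\delta$ for all $t$, after an overall time reparametrisation of $v_\epsilon$ that is absorbed by choosing $v_\epsilon$ on the right point of its orbit.

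\emph{Distinct orbits.} The sequence $\epsilon$ is recorded by an orbit invariant: the combinatorial winding sequence of the geodesic, i.e.\ the cutting sequence relative to $u$. If $\epsilon\neq\epsilon'$ differ at infinitely many indices, then $v_\epsilon$ and $v_{\epsilon'}$ have forward endpoints in different $\Gamma$-orbits with the same backward endpoint. Thus they lie on distinct orbits unless they coincide after a time shift, which the winding data forbid. Identifying sequences that agree eventually leaves uncountably many classes, and Theorem~\ref{thm1} follows. Nonelementarity is used only to guarantee a compact part with hyperbolic returns, so that the contraction estimates above hold.
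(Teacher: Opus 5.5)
There is a genuine gap, and it is the step you flag yourself: the forward shadowing is never established. Your elementary move keeps the entry point $x_i+ih$ and shifts the centre by $\tfrac12$. This does put the exit point exactly at the deck-translate of the old one. But a direct computation gives the forward endpoint as $c_i+R_i+1-O(h^2/R_i^2)$ instead of $c_i+R_i+1$.

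That nonzero discrepancy has an unstable component, which grows like $e^{t}$ relative to the translated orbit. The waiting time $s_{i+1}-s_i'$ until the next deep excursion is fixed by $u$, not by you, and it only lengthens when you demand larger $R_{i+1}$. So your bookkeeping ($R_{i+1}$ large compared with $e^{s_{i+1}-s_i'}$ times earlier errors) cannot control growth that occurs \emph{before} excursion $i+1$ is reached.

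The ``re-matching'', which amounts to straightening a broken geodesic or shadowing a pseudo-orbit, is therefore the whole content of the construction. You leave it as an unspecified implicit-function argument. Without it, the $n$-move geodesics are not even known to make the later excursions. So the limit defining the endpoint of $v_\epsilon$, and the bound $d_{Sa}<\delta$, are unsupported.

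The missing idea is to make each move exact in the unstable direction from the outset. Keep the base point $\tilde u(0)$ and replace the endpoint $\tilde u(+\infty)$ by $p\,\tilde u(+\infty)$, where $p$ is the parabolic element of a short horocycle tangent to the ray. This is the paper's winding $\mathrm{Wind}_{(\tilde H,p)}(\tilde u)$, and it has three properties:
\begin{itemize}
\item it is forward asymptotic to $p\,g_t\tilde u$, so no unstable error ever arises;
\item before the tangency point it stays within $3\ell$ of $\tilde u$, because the endpoints differ by $\lambda=\ell e^{b}$ while the semicircle has Euclidean radius of order $e^{b}$;
\item its time shift satisfies $|\tau|\le\ell$ (Propositions \ref{bound_wind} and \ref{key_prop}).
\end{itemize}
Iterate with $\beta_n=\alpha_0\cdots\alpha_n$ and $\ell_n^\alpha<\varepsilon/(12\cdot 4^n)$. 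This gives monotone convergence of $\beta_n(\infty)$, summable time shifts, and $w_\alpha\in W^{ss}u$ with $d_1(g_tw_\alpha,g_tu)\le 3\varepsilon$ for $t\ge0$. The two-sided bound then follows from the local product $[w_\alpha,u]$ and Lemma \ref{decreasing}. This is the rigorous form of your ``same backward endpoint'' step. As you state it, sharing the backward endpoint gives exponential convergence only on the unstable horocycle of $u$; in general it gives only monotonicity, which is nonetheless enough.

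Two smaller points also need repair.
\begin{itemize}
\item Your claim that sequences differing infinitely often yield endpoints in different $\Gamma$-orbits is asserted via ``winding data'' with no argument. You only need uncountably many distinct endpoints, since $\Gamma$-orbits are countable. The paper gets this by a diagonal argument using $\xi(\alpha)>x_0^\alpha$.
\item A single cusp visited arbitrarily deeply need not exist when $S$ has infinitely many cusps. The deep excursions must be allowed to enter different cusps, as in Proposition \ref{cusprecurrent}.
\end{itemize}
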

	
	The proof is based on the discrepancy between stable horocycles and stable sets. For $u\in T^1 S$, the stable horocycle $H^{ss}u$ is defined as the projection of the stable horocycle of any lift of $u$ to $T^1 \H$, whereas the stable set $W^{ss} u$ is the set
	$$
	W^{ss}u=\{ v\in T^1S:~ \lim_{t\to +\infty} d_{Sa}(g_tu,g_tv)=0 \}.
	$$ 
	
	\begin{theorem}\label{thm2}
		Let $S$ be a hyperbolic surface with at least one cusp. Let $u\in T^1S$ be cusp-recurrent. Then 
		\begin{enumerate}
			\item $H^{ss} u\subsetneq W^{ss}u$,
			\item the set $W^{ss}u$ is an uncountable union of stable horocycles $H^{ss} v_i$.
		\end{enumerate} 
	\end{theorem}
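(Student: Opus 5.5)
The idea is that in a cusp the injectivity radius tends to zero. Two lifts that are far apart in $\H$ can therefore project to vectors that are close in $T^1S$. Near a cusp, the Sasaki distance between $g_tu$ and $g_tv$ is the minimum over the parabolic stabilizer, which is a cyclic group $\langle p\rangle$ generated by a translation $z\mapsto z+1$ in the upper half-plane model with the cusp at $\infty$. I will therefore build $v$ by gluing together pieces of the stable horocycle of $u$, shifted by parabolic elements at successive cusp excursions. The resulting lifts $\tilde v$ are not on the same horocycle as $\tilde u$, but each one stays close to \emph{some} $\gamma$-translate of $g_t\tilde u$ at every time.

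\textbf{Step 1: a local lemma in one cusp.} Take a lift $\tilde u$ whose geodesic enters the horoball $B=\{\operatorname{Im} z> 1\}$, which covers a cusp neighborhood. Suppose it reaches height $e^{T}$, so it spends time about $2T$ inside, with apex at time $t_0$. Let $\tilde w$ be the vector obtained from $\tilde u$ by following the geodesic and applying $p^k$ at the apex. Concretely, $\tilde w$ lies on the geodesic through $g_{t_0}(p^k \tilde u)$, reparametrised so that the two agree at the top. Compute in the upper half-plane. For $t\le t_0$, the vector $g_t\tilde w$ is close to $g_tp^{k}\tilde u$ viewed from $\infty$, but $g_t\tilde w$ and $g_t\tilde u$ differ by a horizontal offset $k$ at height $y(t)$. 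Hence their projections are within distance about $\min(\text{offset modulo }1)/y$. What I will aim to prove is a concrete statement of this shape: if $|k|\le \epsilon e^{T}$, then the geodesic through $p^k$ applied to the part of $\tilde u$ after the apex is asymptotic in $S$ to $u$, and the distance stays below $\delta$ at all times. In fact the cleaner choice is to take $\tilde v$ with the same forward endpoint structure, namely the same future as $p^k\tilde u$ and the same past as $\tilde u$. Two geodesics in $\H$ with endpoints $(x_-,\infty)$ and $(x_-+\eta,\infty)$ shifted… I will instead use the exact construction $\tilde v=$ the vector on the geodesic from $u^-$ to $p^k(u^+)$. While both geodesics are high in $B$, their projections differ by a shift of order $k e^{-T}$ relative to the scale, modulo $1$. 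Before and after the excursion they are exponentially close to $\tilde u$ and to $p^k\tilde u$ respectively. This gives $d_{Sa}(g_tu,g_tv)<\delta$ for all $t$ and $d_{Sa}(g_tu,g_tv)\to0$, while $\tilde v$ is not on $H^{ss}\tilde u$ when $k\neq 0$ (the forward endpoints $u^+$ and $p^k u^+$ differ). Projecting to $S$ gives item (1) of Theorem~\ref{thm2}.

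\textbf{Step 2: iterate over infinitely many excursions.} Cusp-recurrence gives times $t_n\to\infty$ at which $u$ reaches depth $T_n\to\infty$ in a cusp. At each excursion $n$, with cusp stabilizer $\langle p_n\rangle$ conjugated appropriately, choose $k_n\in\{0,1\}$ (or more choices), with $T_n$ large enough that the error $\ll \delta 2^{-n}$. Build $\tilde v_{(k_n)}$ as the limit of geodesics whose forward endpoints are $p_1^{k_1}\cdots p_n^{k_n}$-modified. A nested-interval argument on the boundary shows that the forward endpoints converge. The estimates from Step 1 sum to less than $\delta$, and the distance to $g_tu$ tends to $0$. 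Distinct sequences $(k_n)$ give distinct forward endpoints in $\partial\H$ modulo the stabilizer of $u$'s orbit. This requires that the point of $\partial\H$ be determined by the sequence, which is the main obstacle. I expect to handle it via the cutting sequence / horoball nesting: different $k_n$ make the geodesics exit the $n$-th horoball at different translates. So $\Gamma$-inequivalent endpoints arise except for countably many coincidences. This yields uncountably many stable horocycles inside $W^{ss}u$, which is item (2). The same vectors, taken with the whole-line estimate, also give Theorem~\ref{thm1}, with orbits distinct because their forward endpoints are in distinct $\Gamma$-orbits.
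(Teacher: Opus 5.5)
\textbf{Step 1 does not prove item (1).} In $T^1S$, $H^{ss}u$ is the projection of the stable horocycle of \emph{any} lift of $u$, that is, of $\bigcup_{\gamma\in\Gamma}H^{ss}(\gamma\tilde u)$. Your $\tilde v$ is forward asymptotic to $p^k\tilde u$. The parametrization that gives $d_{Sa}(g_tu,g_tv)\to0$ is the one placing $\tilde v$ on $H^{ss}(p^k\tilde u)$. Since $p^k\tilde u$ is another lift of $u$, this gives $v\in H^{ss}u$. The fact that $p^k\tilde u(+\infty)\neq\tilde u(+\infty)$ in $\partial\H$ is irrelevant: in the quotient, what matters is the $\Gamma$-orbit of the forward endpoint (plus the Busemann level). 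After any finite number of windings the endpoint $\beta\,\tilde u(+\infty)$ stays in $\Gamma\tilde u(+\infty)$. This is why the paper already needs the infinite product $\lim_n\alpha_0\cdots\alpha_n\infty$ for item (1). In fact, for $k=1$ your vector is exactly $y=[w,u]$ from Proposition~\ref{nonexpansiveness}. There it is used only to produce a different $g_t$-orbit, via its backward endpoint, not a point of $W^{ss}u\setminus H^{ss}u$.

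\textbf{Step 2 has the right architecture but leaves the decisive steps unproved.} It is essentially the paper's iterated winding $\beta_n=\alpha_0\cdots\alpha_n$.

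First, a uniform bound $d(g_tu,g_tv_n)<\delta$ does not pass $d\to0$ to the limit. You need, for each $l$, a time $T_l$ after which \emph{all} approximants are $\varepsilon2^{-l}$-close. The paper obtains this because each finite stage $g_{r_n}v_n$ lies in $H^{ss}u$ (the correct reading of your Step 1). Lemma~\ref{distance_horo} then controls the first $l$ windings, while the later windings add at most a geometric tail by Proposition~\ref{key_prop} with $\ell^\alpha_n<\varepsilon4^{-n}/12$.

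Second, what you call the main obstacle is left as an expectation, and ``except for countably many coincidences'' is unjustified. What is needed is that the set of limit endpoints be uncountable; countability of $\Gamma$ then gives uncountably many $\Gamma$-orbits, hence both (1) and (2). Use the paper's normalization $\tilde u(0)=i$, $\tilde u(+\infty)=\infty$, and suppose two binary sequences with infinitely many $1$'s first differ at index $m$. After applying the inverse of their common prefix, the endpoint with $k_m=1$ is $p_m(\eta)\in(x_m,p_m\infty)$, while the one with $k_m=0$ exceeds $x_{m+1}$. So injectivity follows once $x_{n+1}>p_n\infty$ for all $n$. You can arrange this by passing to a subsequence, but some such separation argument must actually be supplied. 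The paper avoids injectivity with a diagonal argument: given a countable list $\xi_0,\xi_1,\dots$, it chooses $x_{k_{n+1}}>(\alpha'_n)^{-1}\cdots(\alpha'_0)^{-1}\xi_{n+1}$ and uses $\alpha'_n\alpha'_{n+1}\cdots\infty>x_{k_n}$.
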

	
	In fact, we exhibit vectors $v \in W^{ss}u$ such that any lifts of the geodesic rays of $u$ and $v$ have distinct endpoints in the ideal boundary.
	
	In \cite{Bellis2018}, Bellis shows that if the infimum of the injectivity radius of $S$ along the geodesic ray $u(\reals^+)$ is positive then $H^{ss}u$ and $W^{ss}u$ coincide. He also asserts that, if $u(\reals^+)$ meets closed geodesics of arbitrarily small length, then these sets are different (see \cite{BurniolDalboHerrero2026} for a complete proof of this fact). Note that in this case the surface is necessarily geometrically infinite. 
	
	Stable horocycles are the orbits of the horocyclic flow on $T^1S$, and have been extensively studied since the seminal work of Hedlund~\cite{Hedlund36}; see also~\cite{dalbotrajectories} for an introduction. When $S$ has finite volume, as an application of the ergodicity of the volume measure under the geodesic flow, we obtain that $H^{ss}u$ does not coincide with $W^{ss}u$ for typical $u\in T^1S$.	
	\begin{theorem}\label{thm3}
		Let $S$ be a noncompact finite volume surface. Then, for almost every vector $u\in T^1 S$, the strong stable set $W^{ss}u$ is an uncountable union of stable horocycles.
	\end{theorem}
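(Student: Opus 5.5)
The plan is to reduce Theorem~\ref{thm3} to Theorem~\ref{thm2}: it suffices to show that for Liouville-almost every $u\in T^1S$, $u$ is cusp-recurrent. Once this holds, Theorem~\ref{thm2}(2) gives that $W^{ss}u$ is an uncountable union of stable horocycles for almost every $u$. Cusp-recurrence has two parts: the ray $u(\reals^+)$ does not diverge through a cusp, and it meets closed horocycles of arbitrarily small length. I will prove each part holds on a full-measure set.

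First, since $S$ has finite volume, the Liouville measure $m$ on $T^1S$ is a finite $g_t$-invariant measure. It is ergodic, by the classical Hopf argument, or simply as a known fact for finite-volume hyperbolic surfaces. By Poincaré recurrence, $m$-almost every $u$ is positively recurrent. In particular $g_tu$ returns infinitely often to a fixed compact set $K$ with $m(K)>0$ (for instance the complement of the cusp neighborhoods). Hence the ray $u(\reals^+)$ does not diverge through a cusp.

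Second, fix a cusp $C$ of $S$, and for $n\ge 1$ let $U_n\subset T^1S$ be the open set of vectors based inside the horoball region of $C$ bounded by the closed horocycle of length $1/n$. Each $U_n$ is nonempty and open, so $m(U_n)>0$. By Birkhoff's ergodic theorem applied to the indicator of $U_n$, for $m$-almost every $u$ the orbit $g_tu$ spends a positive proportion of time in $U_n$ as $t\to+\infty$. In particular it enters $U_n$ at arbitrarily large times. Intersecting over the countably many $n$, we get a full-measure set of $u$ for which the projected ray enters the region bounded by the closed horocycle of length $1/n$ for every $n$. It therefore intersects closed horocycles of arbitrarily small length, namely those of length $1/n$. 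Here one must check that entering the region bounded by the horocycle of length $1/n$ forces the ray to cross that horocycle. This holds whenever the ray starts outside the region, which is automatic at times after a return to $K$. Intersecting with the full-measure set from the first step, almost every $u$ is cusp-recurrent.

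The only potentially delicate point is matching the paper's precise definition of cusp-recurrence, in particular what ``intersects closed horocycles'' means: horocycles around a fixed cusp versus any cusp. Since we use a single fixed cusp, either reading is satisfied. Beyond that, the argument is a direct application of ergodicity of the geodesic flow together with Theorem~\ref{thm2}. I expect no serious obstacle; the substantive work lies in Theorem~\ref{thm2}.
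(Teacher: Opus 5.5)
Your proposal is correct and follows the paper's route. The paper also reduces to Theorem~\ref{thm2} by showing that almost every vector is cusp-recurrent via Hopf ergodicity and Birkhoff. The only difference is that the paper first gets density of almost every forward orbit and then applies Proposition~\ref{denseimpliescr}, whereas you apply Birkhoff directly to the cusp neighborhoods $U_n$. Your crossing check is also unnecessary: a point inside that region already lies on a closed horocycle of length at most $1/n$.
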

	
	When $S$ is geometrically finite, closures of stable horocycles are classified. In particular, if $S$ has finite volume, stable horocycles are either dense or periodic. In contrast, in the geometrically infinite case, there exist stable horocycles with wild closures \cite{Bellis2018,FarreLandesbergMinsky,FarreLandesbergMinsky24,DalboFarreLandesbergMinsky2026}. Although $H^{ss}u$ and $W^{ss}u$ may be different, it is known that their closures always coincide \cite[Corollary 8.3]{FarreLandesbergMinsky}.
	
	The article is self-contained. In Section 2 we recall the essential facts in hyperbolic geometry that will be used. In Section 3 we introduce the notion of winding around a closed horocycle. Section 4 is devoted to the proof of Theorem \ref{thm2}. We discuss abundance of cusp-recurrent vectors and prove Theorem \ref{thm3} in Section 5. Finally, we discuss non-expansiveness and show Theorem \ref{thm1} in Section 6.
	

	


\medskip

	\textbf{Acknowledgment:} We thank Felipe Riquelme for fruitful discussions and Viveka Erlandsson for pointing out the argument of Proposition \ref{nonexpansiveness}.
		
	\section{Preliminaries}
	
	We denote by $\mathbb H$ the hyperbolic plane endowed with its hyperbolic distance $d$. 
	Throughout the paper we use the upper half-plane model $\mathbb H=\{z=x+iy\in\mathbb C : y>0\} $
	equipped with the metric
	\[
	ds^2=\frac{dx^2+dy^2}{y^2}.
	\]
	The Riemannian distance induced by this metric is denoted by $d$. We denote the unit tangent bundle of $\H$ by $T^1\mathbb H$, and the basepoint projection by $\pi:T^1\mathbb H\to\mathbb H$.
	
	\medskip
	
	\subsection{Horocycles and boundary at infinity}
	
	The boundary at infinity of $\mathbb H$ is $\partial\mathbb H=\mathbb R\cup\{\infty\}$.
	Every geodesic ray in $\mathbb H$ converges to a unique point of $\partial\mathbb H$, and two rays define the same point at infinity if and only if they remain at bounded distance from each other.
	
	\medskip
	
	Let $\xi\in\partial\mathbb H$, and let $(c(t))_{t\ge0}$ be a geodesic ray converging to $\xi$. 
	The \emph{Busemann function} based at $\xi$ is defined by
	\[
	B_\xi(x,y)
	=
	\lim_{t\to+\infty}
	\big(
	d(x,c(t)) - d(y,c(t))
	\big),
	\qquad x,y\in\mathbb H.
	\]
	This limit exists and does not depend on the choice of the geodesic ray $c$ asymptotic to $\xi$. 
	For fixed $x\in\mathbb H$, the level sets of the map $y\mapsto B_\xi(y,x)$ are called \emph{horocycles} centered at $\xi$. Geometrically, horocycles are circles tangent to the real line or horizontal lines.
	
	\medskip
	
	For $u\in T^1\mathbb H$, we denote by $u(+\infty)\in\partial\mathbb H$ its forward endpoint. 
	The \emph{stable horocycle} through $u$ is the subset
	\[
	H^{ss} u
	=
	\{ v\in T^1\mathbb H : v(+\infty)=u(+\infty) 
	\text{ and } 
	B_{u(+\infty)}(\pi(v),\pi(u))=0 \}. 
	\] We will also denote 
	\[
	H^{ws} u
	=
	\{ v\in T^1\mathbb H : v(+\infty)=u(+\infty)  \}. 
	\]
	
	\medskip
	
	\subsection{Distance on the unit tangent bundle}

	
	We consider the distance $d_1$ on the unit tangent bundle of $T^1\H$ defined as follows \cite{ballmannlectures}:
	$$ d_1(\tilde v,\tilde w):=d(\tilde v(0),\tilde w(0))+d(\tilde v(1),\tilde w(1)), \quad \tilde v, \tilde w \in T^1 \H.$$
	
	\begin{proposition}
		The distance $d_1$ is equivalent to the Sasaki distance on the unit tangent bundle $T^1 \H$. 
	\end{proposition}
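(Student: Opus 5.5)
We need to show that there are constants $C\ge 1$ and $\varepsilon>0$ with $C^{-1}d_{Sa}\le d_1\le C\,d_{Sa}$, at least on pairs at distance less than $\varepsilon$, and then extend to a global comparison. The plan rests on homogeneity. The isometry group $PSL_2(\reals)$ acts simply transitively on $T^1\H$ and preserves both distances: the Sasaki metric is built from the hyperbolic metric, and $d_1$ is built from $d$ and the geodesic flow, which commutes with isometries. So it suffices to compare the two metrics near a fixed base vector $\tilde v_0$, say the vertical vector at $i$, and then propagate.

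\textbf{Local comparison.} Both distances are continuous, and $d_1$ is a genuine metric. It is nonnegative and satisfies the triangle inequality because $d$ does. It separates points because $\tilde v(0)=\tilde w(0)$ together with $\tilde v(1)=\tilde w(1)$ forces the two geodesics to coincide, hence $\tilde v=\tilde w$.

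Let $F:T^1\H\to\H\times\H$ be the map $\tilde v\mapsto(\tilde v(0),\tilde v(1))$. It is a smooth embedding onto the closed set $\{(x,y):d(x,y)=1\}$. Its differential is injective, since a Jacobi field with $J(0)=0$ and $J(1)=0$ vanishes in negative curvature.

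The metric $d_1$ is the pullback by $F$ of the sum metric on $\H\times\H$. That sum metric is bi-Lipschitz to the Riemannian product metric, with constant $\sqrt2$. Hence, on a compact neighborhood $K$ of $\tilde v_0$, $d_1$ is bi-Lipschitz to the distance induced by the Riemannian metric $F^*(g\oplus g)$. Two Riemannian metrics on a manifold are bi-Lipschitz on compact sets, so on $K$ this distance is comparable to $d_{Sa}$.

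One subtlety must be handled: the induced path distances are restricted to paths staying in $K$, rather than being the global distances. I would fix this by taking $\varepsilon$ small, so that geodesic segments of length less than $\varepsilon$ starting in a smaller neighborhood stay inside $K$. This gives constants $C,\varepsilon$ at $\tilde v_0$, and by homogeneity the same constants work at every vector.

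\textbf{Global comparison.} For pairs at distance at least $\varepsilon$, I would compare both distances to $d(\tilde v(0),\tilde w(0))$, up to additive and multiplicative constants.

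On the Sasaki side, the basepoint projection is 1-Lipschitz, so $d_{Sa}\ge d(\tilde v(0),\tilde w(0))$. Conversely, $d_{Sa}\le d(\tilde v(0),\tilde w(0))+\pi$: parallel transport along the segment joining the basepoints, then rotate in the fiber.

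On the $d_1$ side, the triangle inequality gives $d(\tilde v(1),\tilde w(1))\le d(\tilde v(0),\tilde w(0))+2$. Hence
\[ d(\tilde v(0),\tilde w(0))\le d_1\le 2d(\tilde v(0),\tilde w(0))+2. \]

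When both distances are at least $\varepsilon'$, these additive constants can be absorbed into multiplicative ones. Arranging this is the main obstacle: the pairs with small $d_1$ but large $d_{Sa}$, or vice versa, must be ruled out. The local step handles this only if smallness of one distance forces smallness of the other. That follows from a compactness argument at $\tilde v_0$ using homogeneity. Since $F$ is a homeomorphism onto its image and $d_{Sa}$ is proper, $d_1(\tilde v_0,\tilde w)\to0$ if and only if $d_{Sa}(\tilde v_0,\tilde w)\to0$.

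Combining the local and global regimes gives a single constant $C$ with $C^{-1}d_{Sa}\le d_1\le C\,d_{Sa}$.
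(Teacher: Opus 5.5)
The paper gives no proof of this proposition. It is stated right after the definition of $d_1$, with only a pointer to Ballmann's lectures, so your argument is an independent, self-contained route, and its structure is sound.

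\begin{itemize}
\item Invariance of both distances under $PSL_2(\R)$ reduces everything to pairs $(\tilde v_0,\tilde w)$.
\item Injectivity of $dF$ (no conjugate points) gives the local comparison.
\item Write $d_B=d(\tilde v(0),\tilde w(0))$. The coarse bounds $d_B\le d_{Sa}\le d_B+\pi$ and $d_B\le d_1\le 2d_B+2$ give bi-Lipschitz control once both distances are at least some $\varepsilon'$.
\item Continuity of $F^{-1}$ on $\{(x,y):d(x,y)=1\}$ shows that small $d_1$ forces small $d_{Sa}$, uniformly by homogeneity.
\end{itemize}

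The last point is exactly what closes the case split. Choose $\varepsilon'$ so that $d_1<\varepsilon'$ implies $d_{Sa}<\varepsilon$. Then either $d_{Sa}<\varepsilon$ and the local step applies, or both distances are at least $\min(\varepsilon,\varepsilon')$ and the coarse bounds apply. Properness of $d_{Sa}$ plays no role.

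Compared with the bare citation, your route yields a \emph{global} bi-Lipschitz constant. Both distances on $T^1S$ are infima over $\Gamma$-translates: the paper defines $d_1$ that way, and $T^1\H\to T^1S$ is a Riemannian covering for the Sasaki metrics. So the constant descends unchanged to $T^1S$. This is exactly how the paper uses the proposition afterwards: for the claimed equivalence on $T^1S$, for the metric-independence of $W^{ss}$, and for passing from $d_1$ to $d_{Sa}$ in Theorem~\ref{thm1}.

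The one step you compress is the passage from ``$d_1$ is the pullback of the sum distance'' to ``$d_1$ is locally bi-Lipschitz to the path distance of $F^*(g\oplus g)$''. This compares the ambient distance of $\H\times\H$, restricted to the embedded submanifold $F(T^1\H)$, with its intrinsic distance. One inequality is trivial: push paths forward by $F$. The other needs a line of argument. For example, take a submanifold chart at $F(\tilde v_0)$ in which the image is a linear slice. Then, for nearby points, both distances are comparable to the Euclidean distance between coordinates.

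The subtlety you do flag, paths leaving $K$, is a different and easier issue. With that line added, the argument is complete.
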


	\medskip
	
	\subsection{Fuchsian groups}
	Throughout the paper, $\Gamma$ denotes a torsion-free Fuchsian group acting properly discontinuously by isometries on $\mathbb H$. 
	We consider the quotient surface $S=\Gamma\backslash\mathbb H$
	and its unit tangent bundle
	$	T^1S=\Gamma\backslash T^1\mathbb H$.
	We use a tilde to denote lifts to the universal cover: given $u\in T^1S$, we write $\tilde u\in T^1\mathbb H$ for a unit tangent vector projecting onto $u$ under the canonical projection $T^1\mathbb H\to T^1S$.
	
	The sets $H^{ss}u$ and $H^{ws}u$ are $\Gamma$-equivariant and therefore descend to well-defined subsets in $T^1S$, which we still denote by $H^{ss}u$ and $H^{ws}u$.
	
	On the unit tangent bundle $T^1 S$ of a hyperbolic surface $S=\Gamma\backslash\mathbb H $, the distance $d_1$ descends as follows: 
	
	$$d_1(u,v)=\inf_{\gamma\in \Gamma}\{d_1(\tilde u,\gamma\tilde v)\},$$
	where $\tilde u$ and $\tilde v$ are lifts of $u$ and $v$ to $T^1\H$. This distance is again equivalent to the Sasaki metric on $T^1S$.
	
	The strong-stable set of a vector $u\in T^1S$ is defined as
	$$
	W^{ss}u=\{ v\in T^1S:~ \lim_{t\to +\infty} d_1(g_tu,g_tv)=0 \}.
	$$ 
	We remark that, since the distance $d_1$ is equivalent to the Sasaki distance, they give the same notion of strong-stable set.

	\subsection{Cusps and parabolic isometries}
	
	Let $\Gamma$ be a Fuchsian group and let $p\in \Gamma$ be a parabolic isometry, that is, an isometry fixing a unique point $x_p$ in $\partial \H$. The stabilizer $\Gamma_p$ of $x_p$ in $\Gamma$ is a cyclic group. There exist a horoball $B_p$ centered at $x_p$ such that the $\Gamma_p \backslash B_p$ projects isometrically to $S=\Gamma \backslash \H$. By definition, this projection is called a cusp.
	
	Each horocycle centered at $x_p$ projects to a closed horocycle in $S$, and every closed horocycle in $S$ arises in this way.
	
	\subsection{Some technical lemmas}
	
	The following two lemmas can be proved by elementary computations using the formula for the hyperbolic distance \cite[Theorem 7.2.1]{Beardon1983}.
	\begin{lemma}\label{decreasing}
		Let $u,v\in T^1\H$ such that $v\in H^{ws} u$. Then $d(v(t),u(t))$ is non-increasing in $t\in \reals$.
		
	\end{lemma}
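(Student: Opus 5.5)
Since the statement is invariant under isometries of $\H$ and under reparametrizing time, I will normalize so that $u(+\infty)=v(+\infty)=\infty$ in the upper half-plane model. Geodesics ending at $\infty$ are then vertical lines. Choosing unit speed, I write
\[
u(t)=x_1+i\,y_1e^{t}, \qquad v(t)=x_2+i\,y_2e^{t},
\]
with $x_1,x_2\in\reals$ and $y_1,y_2>0$. The goal is to compute $d(u(t),v(t))$ explicitly and check monotonicity in $t$.

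I will use the standard formula \cite[Theorem 7.2.1]{Beardon1983},
\[
\cosh d(z,w)=1+\frac{|z-w|^2}{2\,\mathrm{Im}z\,\mathrm{Im}w}.
\]
Setting $a=x_1-x_2$, this gives
\[
\cosh d(u(t),v(t))=1+\frac{a^2+(y_1-y_2)^2e^{2t}}{2y_1y_2e^{2t}}
=1+\frac{a^2e^{-2t}}{2y_1y_2}+\frac{(y_1-y_2)^2}{2y_1y_2}.
\]
The first term is non-increasing in $t$, since $e^{-2t}$ is decreasing and $a^2\ge0$. The second term is constant. Because $\cosh$ is increasing on $[0,\infty)$, it follows that $d(u(t),v(t))$ is non-increasing. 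It is strictly decreasing unless $a=0$, in which case the two geodesics coincide and the distance is constant.

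There is no real obstacle. The only care needed is the normalization: an isometry $g$ sending the common endpoint to $\infty$ preserves distances and commutes with the geodesic parametrization, so the computation above covers the general case.
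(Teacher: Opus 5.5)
Your proof is correct and follows the paper's approach: the paper gives no details beyond saying the lemma follows from elementary computations with the distance formula of \cite[Theorem 7.2.1]{Beardon1983}. Normalizing the common endpoint to $\infty$ and expanding $\cosh d(u(t),v(t))$ is exactly that computation, carried out explicitly.
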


	\begin{lemma}\label{distance_horo}
		Let $u,v\in T^1\H$ such that $v\in H^{ss} u$. Then for all $t\ge0$
		\[
		\sinh\left(\frac{d(v(t),u(t))}{2}\right) = \sinh\left(\frac{d(v(0),u(0))}{2}\right)e^{-t}.
		\]
	\end{lemma}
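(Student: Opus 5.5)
Both horocycles in the statement share the center $\xi=u(+\infty)$, so I will use the upper half-plane model and send $\xi$ to $\infty$ by an isometry. The argument then reduces to the explicit distance formula $\cosh d(z,w)=1+\frac{|z-w|^2}{2\,\mathrm{Im} z\,\mathrm{Im} w}$ \cite[Theorem 7.2.1]{Beardon1983}.

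\textbf{Normalization.} I normalize so that $u(t)=x_0+ie^{t}$ (an upward vertical geodesic). For $v\in H^{ws}u$, the geodesic $v$ is also vertical with the same endpoint $\infty$, so $v(t)=x_1+ie^{t+s}$ for some $s\in\reals$. The case $v\in H^{ss}u$ corresponds to $B_\infty(\pi(v),\pi(u))=0$, i.e. equal heights, i.e. $s=0$. Write $a=|x_0-x_1|$.

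\textbf{Lemma \ref{decreasing}.} By the distance formula,
\[
\cosh d(v(t),u(t))=1+\frac{a^2+(e^{t+s}-e^{t})^2}{2e^{2t+s}}=1+\frac{a^2e^{-2t}}{2e^{s}}+\frac{(e^{s}-1)^2}{2e^{s}}.
\]
The first nonconstant term is non-increasing in $t$ and the last term does not depend on $t$. Hence $\cosh d$, and therefore $d$, is non-increasing. The monotonicity is strict unless $a=0$, in which case $d\equiv|s|$ is constant.

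\textbf{Lemma \ref{distance_horo}.} Here $s=0$, so $\cosh d=1+\frac{a^2e^{-2t}}{2}$. Using the identity $\cosh d-1=2\sinh^2(d/2)$, this becomes $\sinh^2(d/2)=\frac{a^2e^{-2t}}{4}$, i.e. $\sinh(d/2)=\frac{a}{2}e^{-t}$. Evaluating at $t=0$ identifies $\frac a2=\sinh(d(v(0),u(0))/2)$, which gives the formula for all $t$ (in fact for all $t\in\reals$, not only $t\ge 0$).

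There is no real obstacle. The only point needing care is the correct identification of the horocycle condition with equal imaginary parts after normalization. This holds because $B_\infty(z,w)=\log(\mathrm{Im}\,w/\mathrm{Im}\,z)$, which one checks by computing the distances to $c(t)=ie^{t}$ as $t\to\infty$.
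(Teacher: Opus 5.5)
Your proof is correct and follows the paper's approach. The paper only remarks that this lemma follows from elementary computations with the hyperbolic distance formula of \cite[Theorem 7.2.1]{Beardon1983}, and that is exactly the computation you carry out after normalizing the common endpoint to $\infty$, including the correct identification of the horocycle condition with equal imaginary parts.
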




%

\par 

\section{Winding around a closed horocycle}

\begin{definition}
	Let $(\tilde H,p)$ be a pair where $\tilde H$ is a horocycle in $\mathbb{H}$ and $p$ be a parabolic isometry preserving $\tilde H$. The \emph{translation length} $\ell(\tilde H,p)$ of $(\tilde H,p)$ is defined as the length of the horocyclic arc between any $x\in \tilde H$ and $px$.
	The \emph{positive orientation} of $\tilde H$ relatively to $p$ is the orientation induced by any arc-length parametrization $s\mapsto \tilde H(s)$ of $\tilde H$ satisfying \[
	p \tilde H(s) = \tilde H (s+\ell (\tilde H,p)), \quad \forall s\in \reals.
	\]
\end{definition}


\begin{definition}A vector $\tilde u\in T^1 \H$ is \emph{tangent to the oriented pair} $(\tilde H, p)$ if:
	\begin{itemize}
		\item the geodesic ray $\tilde u (\mathbb{R}^+)$ is tangent to $\tilde H$ at a point $\tilde u(t_0)$,
		\item if $s\mapsto \tilde H(s)$ is an arc-length parametrization of $\tilde H$ with positive orientation and $\tilde u(t_0)= \tilde H (s_0)$, then
		\[
		\frac{\mathrm{d}\tilde u}{\mathrm{d} t}(t_0) = \frac{\mathrm{d}\tilde H}{\mathrm{d} s}(s_0).
		\]
	\end{itemize}
\end{definition}

\begin{definition}
	Let $\tilde u\in T^1 \H$ be tangent to an oriented pair $(\tilde H, p)$. 	
	We define the \emph{winding of $\tilde u$ around the pair} $(\tilde H, p)$ (see Figure \ref{pic1}), denoted by $\mathrm{Wind}_{(\tilde H, p)}(\tilde{u})\in T^{1}\mathbb{H}^{2}$, as the unique vector satisfying
	\begin{itemize}
		\item $\mathrm{Wind}_{(\tilde H, p)}(\tilde {u})(0)=\tilde{u}(0)$,
		\item $\mathrm{Wind}_{(\tilde H, p)}(\tilde{u})(+\infty)= p \, \tilde{u}(+\infty) $.
	\end{itemize}
\end{definition}

 

	\begin{figure}[h]
	\centering
	\includegraphics[scale=1.5]{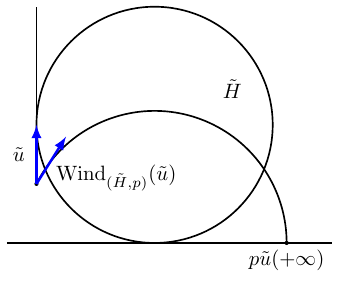}
	\caption{Winding of $\tilde u$ around the pair $(\tilde H, p)$.} 
	\label{pic1}
\end{figure}

\begin{definition}\label{def_winding_time}

The \emph{winding time} associated to $\operatorname{Wind}_{(\tilde H, p)}(\tilde u)$ is the real number $\tau_{\tilde H , p,\tilde u}$ defined by
$$
\tau_{\tilde H, p,\tilde u}=B_{\tilde u(+\infty)}(p^{-1}\tilde u(0), \tilde u(0)).
$$

\end{definition}


 \begin{proposition}[bound of the winding time]\label{bound_wind}
 	Let $\tilde u\in T^1 \H$ be tangent to an oriented pair $(\tilde H, p)$.  	
 	Then
 	\[
 	\left| \tau_{\tilde H, p,\tilde u} \right| \leq \ell(\tilde H, p).
 	\]
 	
 \end{proposition}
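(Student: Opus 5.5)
The plan is to normalize by an isometry and then compute the winding time explicitly from the closed formula for the Busemann function in the upper half-plane. Both the winding time and the translation length are invariant under conjugating the whole configuration $(\tilde H,p,\tilde u)$ by an element of $\mathrm{PSL}_2(\reals)$. Indeed, $B_{g\xi}(gy,gx)=B_\xi(y,x)$ and $g$ maps horocyclic arcs isometrically.

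\textbf{Normalization.} I will assume that the fixed point of $p$ is $\infty$, that $\tilde H=\{\operatorname{Im} z=1\}$, and that $p(z)=z+\ell$ with $\ell=\ell(\tilde H,p)$. The value $\ell>0$ follows by replacing the orientation if needed; the positive orientation is then the direction of increasing real part. On the line $\operatorname{Im} z=1$ the hyperbolic arc length equals Euclidean length, so $p$ translates by exactly $\ell$.

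\textbf{The tangent geodesic.} The geodesic through $\tilde u$ is tangent to $\tilde H$ at $\tilde u(t_0)$ with $t_0\ge 0$. Hence it is the unit semicircle centred at some $c\in\reals$, tangent at $c+i$. Tangency to the oriented pair means that at the top it moves in the $+x$ direction, so $\xi:=\tilde u(+\infty)=c+1$. Since $t_0\ge0$, the basepoint $x:=\tilde u(0)$ lies on the arc before the top. I will write $x=c+e^{i\theta}$ with $\theta\in[\pi/2,\pi)$, so that $\cos\theta\le 0$.

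\textbf{Computing $\tau$.} I will use the standard formula, obtainable from the distance formula in \cite[Theorem 7.2.1]{Beardon1983}:
\[
B_\xi(y,x)=\log\frac{\operatorname{Im}(x)\,|y-\xi|^2}{\operatorname{Im}(y)\,|x-\xi|^2}.
\]
Apply it with $y=p^{-1}x=x-\ell$, which has the same imaginary part as $x$. This gives
\[
\tau_{\tilde H,p,\tilde u}=\log\frac{|x-\xi-\ell|^2}{|x-\xi|^2}.
\]
Next, $x-\xi=e^{i\theta}-1$, so $|x-\xi|^2=2(1-\cos\theta)$ and $\operatorname{Re}(x-\xi)=-(1-\cos\theta)$. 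Expanding the numerator yields
\[
\frac{|x-\xi-\ell|^2}{|x-\xi|^2}=1+\ell+\frac{\ell^2}{2(1-\cos\theta)}.
\]

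\textbf{Conclusion.} Since $1-\cos\theta\ge1$, the ratio lies in $[1+\ell,\,1+\ell+\ell^2/2]$. Both endpoints lie in $[1,e^\ell]$, so $0\le\log(1+\ell)\le\tau_{\tilde H,p,\tilde u}\le\ell$, which in particular gives $|\tau|\le\ell$.

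The main point requiring care is the sign and orientation bookkeeping. One must check that positive orientation together with tangency on the forward ray ($t_0\ge0$) really forces $\xi=c+1$ and $\cos\theta\le0$. If $x$ were allowed past the tangency point, $1-\cos\theta$ could be small and the bound would fail. Everything else is a routine computation.
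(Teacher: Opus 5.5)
Your proof is correct, and it takes a different route from the paper. The paper argues synthetically, using four facts: the cocycle identity for $B_{\tilde u(+\infty)}$; the bound $|B_\xi(a,b)|\le d(a,b)$; the equality $B_\xi(a,b)=-d(a,b)$ when $a$ lies on the ray from $b$ to $\xi$; and $d(q,p^{\pm1}q)\le \ell$ for $q\in\tilde H$. It gets $\tau\le\ell$ by inserting the tangency point $q$. It gets $\tau\ge-\ell$ by a separate detour through the first intersection point of the ray $[\tilde u(0),p\tilde u(+\infty))$ with $\tilde H$.

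Your explicit computation, with $\tilde H=\{\operatorname{Im} z=1\}$ and $p(z)=z+\ell$, yields the exact formula $e^{\tau}=1+\ell+\ell^2/(2(1-\cos\theta))$. This gives the sharper bound $\log(1+\ell)\le\tau\le\log(1+\ell+\ell^2/2)\le\ell$. In particular $\tau$ is always positive, so the lower-bound half of the paper's argument is superfluous. Your formula also shows exactly where the hypothesis $t_0\ge 0$ enters: it forces $1-\cos\theta\ge 1$, and without it $\tau$ can blow up. In exchange, the paper's argument is coordinate-free. It would carry over unchanged to settings with no closed formula for Busemann functions, e.g.\ variable negative curvature with parabolic isometries preserving horospheres.

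One small expository fix: conjugation by $\mathrm{PSL}_2(\mathbb{R})$ cannot change the sign of the translation of $p$. To reach $p(z)=z+\ell$ with $\ell>0$, you can conjugate by the reflection $z\mapsto-\bar z$. All the data are invariant under the full isometry group: Busemann functions, horocyclic length, the orientation defined through $p$, and tangency. Alternatively, redo the mirror computation, which gives the same formula with $1+\cos\theta$ in place of $1-\cos\theta$ and $\theta\in(0,\pi/2]$.
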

 \begin{proof}
 	Let $q$ be the tangency point between $\tilde u (\mathbb{R}^+)$ and $\tilde H$.
 	As $q$ belongs to $\tilde u(\R^+)$, we have
 	$$
 	B_{\tilde u(+\infty)}(q,\tilde u(0))=-d(q,\tilde u(0)).
 	$$
 	In addition, as $q\in \tilde H$ we know that $d(p^{-1}q,q)\le \ell(\tilde H, p)$. Thus
 	
 	\begin{align*}
 		B_{\tilde u(+\infty)}(p^{-1}\tilde u(0),\tilde u(0)) 
 		& =  B_{\tilde u(+\infty)}(p^{-1}\tilde u(0),p^{-1}q)+  
 		B_{\tilde u(+\infty)}(p^{-1}q,q) +B_{\tilde u(+\infty)}(q,\tilde u(0)) \\
 		& \leq  d(p^{-1}\tilde u(0),p^{-1}q)+\ell(\tilde H,p)-d(q,\tilde u(0))\\
 		& = \ell(\tilde H, p).
 	\end{align*}
 	
 	For the other inequality, we observe that $p \tilde u (+\infty)$ is between $\tilde u (+ \infty)$ and the fixed point $x_p$ of $p$, so the geodesic ray $[\tilde u(0), p \tilde u(+\infty))$ intersects $\tilde H$ two times. Let $z$ be the first point of intersection between $[\tilde u(0), p \tilde u(+\infty))$ and $\tilde H$ and denote $q'=p^{-1} z$.  	
 	Then $B_{\tilde u(+\infty)}(p^{-1}\tilde u(0),q')=d(p^{-1}\tilde u(0),q')$ and $d(q',p q')\le \ell(\tilde H,p)$. 
 	
 	Therefore 	
 	\begin{align*}
 		B_{\tilde u(+\infty)}(p^{-1}\tilde u(0),\tilde u(0)) &= B_{\tilde u(+\infty)}(p^{-1}\tilde u(0),q')+B_{\tilde u(+\infty)}(q',p q')+B_{\tilde u(+\infty) }(p q',\tilde u(0)) \\
 		& \geq d(p^{-1}\tilde u(0),q')-d(q',p q')-d(p q',\tilde u(0))\\
 		&\ge - \ell (\tilde H, p).
 	\end{align*}
 \end{proof}
 
 
 
 \begin{proposition}[Key Proposition]\label{key_prop} 
 	Let $\tilde u\in T^1 \H$ be tangent to an oriented pair $(\tilde H, p)$.  	
 	Then, for every $t\ge 0$,
 	\begin{align*}
 		\min \{ d_1\!\left(g_{t+\tau_{\tilde H, p ,\tilde u}}\bigl(\operatorname{Wind}_{(\tilde H, p )}(\tilde u)\bigr),\, g_t\tilde u\right),
 		d_1\!\left(g_{t+\tau_{\tilde H, p ,\tilde u}}\bigl(\operatorname{Wind}_{(\tilde H, p )}(\tilde u)\bigr),\, p g_t\tilde u\right)
 		\}
 		\\ \le 12 \ell(\tilde H, p).
 	\end{align*}
 	
 \end{proposition}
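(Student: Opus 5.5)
Write $w=\operatorname{Wind}_{(\tilde H,p)}(\tilde u)$, $\tau=\tau_{\tilde H,p,\tilde u}$ and $\ell=\ell(\tilde H,p)$. The plan is to compare $w$ with two reference vectors, $\tilde u$ and $p\tilde u$. Geometrically, $w$ starts at $\tilde u(0)$, follows $\tilde u$ until it reaches the horocycle $\tilde H$, winds once around it, and then follows $p\tilde u$.

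The first observation is that $g_\tau w \in H^{ss}(p g_0\tilde u)$ up to the choice of normalisation. Indeed, $w(+\infty)=p\tilde u(+\infty)$, so $w\in H^{ws}(p\tilde u)$. The Busemann offset between the two basepoints is
\[
B_{p\tilde u(+\infty)}(\tilde u(0),p\tilde u(0))=B_{\tilde u(+\infty)}(p^{-1}\tilde u(0),\tilde u(0))=\tau .
\]
Hence $g_{\tau}w$ lies on the stable horocycle of $p\tilde u$, with the sign chosen so that $t+\tau$ matches the statement. Consequently, for every $t$, the vectors $g_{t+\tau}w$ and $pg_t\tilde u$ are horocyclically related. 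By Lemma \ref{distance_horo}, the distance $d(g_{t+\tau}w(0),pg_t\tilde u(0))$ decreases exponentially from its value at $t=0$. The same holds at time $1$, which controls $d_1$.

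So the task reduces to bounding things at a suitable time. I would split into two regimes.

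\emph{Regime $t\ge t_0$.} Here $t_0$ is the tangency time of $\tilde u$ with $\tilde H$. I would show that $d(w(t_0+\tau'),p\tilde u(t_0))$ is $O(\ell)$ for the matching time, and then use the horocyclic monotonicity above (Lemma \ref{distance_horo}, or Lemma \ref{decreasing} for the weak version) to get the bound for all later $t$.

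To get the bound at the tangency time, I would use the proof of Proposition \ref{bound_wind}. The ray $[\tilde u(0),p\tilde u(+\infty))$ meets $\tilde H$ at a first point $z$, and $z$ is within horocyclic distance $\ell$ of $q$. Its second intersection lies near $pq$, because both rays exit the horoball bounded by $\tilde H$ heading to points adjacent under $p$. One estimate requires care: showing that the second intersection is within distance about $2\ell$ of $pq$. I would argue this by comparing triangles with vertices $\tilde u(0)$, $q$ and $pq$, using that horocyclic arcs of length $\ell$ subtend geodesic distance at most $\ell$. Combined with $|\tau|\le\ell$ from Proposition \ref{bound_wind}, this gives a bound of the form $c\ell$ for the basepoint distances at times $t$ and $t+1$ in this regime.

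\emph{Regime $0\le t\le t_0$.} Here I would compare instead with $g_t\tilde u$, since $w$ and $\tilde u$ share the basepoint $\tilde u(0)$. Before the ray $w$ reaches $z$, both geodesics start at the same point. Their endpoints $\tilde u(+\infty)$ and $p\tilde u(+\infty)$ are both seen from $\tilde u(0)$ inside the shadow of the horoball. A convexity and comparison argument (the distance between two geodesics from a common point is increasing) bounds $d(w(t+\tau),\tilde u(t))$ by its value near time $t_0$. That value is again $O(\ell)$ by the tangency estimate above, plus $|\tau|\le\ell$ to absorb the time shift. Boundary effects near $t_0$ are covered by taking the minimum of the two comparisons.

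Summing the contributions at times $t$ and $t+1$ should produce a constant like $12$. The main obstacle is the uniform tangency estimate, namely that both the entry and exit points of $w$ on $\tilde H$ are within a fixed multiple of $\ell$ of $q$ and $pq$. The difficulty is that this must hold without assuming $\ell$ small. If $\ell$ is large, the claim is essentially trivial in the middle region anyway, because distances there are already bounded by a multiple of $\ell$ via the triangle inequality.
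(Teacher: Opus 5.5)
Your architecture matches the paper's. You split at the tangency time $t_0$, use that $d(w(t),\tilde u(t))$ is nondecreasing for rays from the common point $\tilde u(0)$ before $t_0$, use monotonicity along the stable direction after $t_0$, and absorb $|\tau|\le\ell$. Your observation that $g_\tau w\in H^{ss}(p\tilde u)$ is correct, and slightly sharper than the paper's Lemma \ref{decreasing} plus $2|\tau|$. The gap is the one quantitative input everything rests on: an $O(\ell)$ bound for $d(w(t_0),\tilde u(t_0))$ and $d(w(t_0+\tau),p\tilde u(t_0))$. You propose to get it from the points where $w$ crosses $\tilde H$, claiming they lie within $O(\ell)$ of $q$ and $pq$, and that claim is false.

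Because $\tilde u$ is \emph{tangent} to $\tilde H$, the intersection is degenerate, and an $O(\ell)$ perturbation of the geodesic moves the crossing points by order $\sqrt{\ell}$. Explicitly, normalize $\tilde H=\{\operatorname{Im} z=e^b\}$, $p(z)=z+\lambda$, $\tilde u(0)=i$. Then $\tilde u$ runs along the half-circle through $\pm i$ centered at $q_1=\sqrt{e^{2b}-1}$, with $q=q_1+ie^b$ and $\ell=\lambda e^{-b}$. The vector $w$ runs along the half-circle through $\pm i$ centered at $c'$, where $c'+\sqrt{1+c'^2}=q_1+e^b+\lambda$, so $\lambda/2\le c'-q_1\le\lambda$. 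Its crossings with $\tilde H$ have real parts $c'\pm\sqrt{c'^2-q_1^2}$, and $c'^2-q_1^2\ge\lambda q_1$. Hence their horocyclic offsets from $q$ and $pq$ are at least $(\sqrt{\lambda q_1}-\lambda)e^{-b}\approx\sqrt{\ell}$ when $e^b$ is large. For instance $e^b=10^4$, $\lambda=1$ gives $\ell=10^{-4}$ but offsets of about $10^{-2}$. So this route yields at best $O(\sqrt{\ell})$, not $12\ell$. Your sketched triangle comparison with vertices $\tilde u(0),q,pq$ does not address this.

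The missing idea is to compare \emph{tangency} points, which move only to first order. The geodesic of $w$ is tangent to a horocycle centered at the fixed point of $p$ at the Euclidean top $q'=c'+iR'=w(t_0')$ of its half-circle, with $0\le c'-q_1<\lambda$ and $0\le 2R'-2e^b<\lambda$. Hence $d(q,q')\le (c'-q_1)e^{-b}+\log(R'e^{-b})\le\frac{3}{2}\ell$. This gives $|t_0-t_0'|\le\frac{3}{2}\ell$, $d(w(t_0),q)\le 3\ell$ and $d(w(t_0),pq)\le 4\ell$, and with these inputs your two regimes close.

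A secondary point: for $t\in(t_0-1,t_0)$, taking the minimum of the two comparisons does not by itself suffice, since each $d_1$ term involves one time before and one time after $t_0$. You need one more estimate. Either bound $d(p\tilde u(t+1),\tilde u(t+1))\le e\ell$ using $d(\tilde u(t+1),q)\le 1$, as the paper does. Or run your horocyclic relation backwards, $\sinh(d_t/2)=e^{t_0-t}\sinh(d_{t_0}/2)$, which costs only a factor $e$.
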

 \begin{proof}
 	For simplicity, we denote $\tilde v=\mathrm{Wind}_{(\tilde H, p )}(\tilde u)$, $\tau=\tau_{\tilde H, p ,\tilde u}$ and $\ell = \ell (\tilde H, p)$.
 	Up to applying an isometry of $\mathbb{H}$, we can assume that $\tilde H$ is centered at $\infty$, $\tilde u(0)=i$ and the isometry $p$ is of the form $p(z)=z+\lambda$ with $\lambda>0$. In this situation, $\tilde u (+\infty)$ must be a positive number.
 	
 	Let $q=\tilde u(t_1)$, $t_1\ge 0$, the point where $\tilde H$ and the geodesic ray $\tilde u (\mathbb{R}^+)$ are tangent. We observe that $p\tilde u (+\infty )$ lies on the right of $\tilde u (+\infty)$. The geodesic ray $\tilde v (\mathbb{R}^+)$ is tangent to a unique horocycle $\tilde H'$ centered at infinity, and $\tilde H'$ lies above $\tilde H$. Let $q' = \tilde v (t_1')$, $t_1'\ge 0$, be the tangency point. The first step is to control the distance between $q$ and $q'$.
 	
 	Write
 	\[
 	q= q_1 + e^{b} i, \quad q'=q_1' +e^ {b'} i, \quad q_1,q_1', b, b'\in \reals.
 	\]
 	Then
 	\[
 	d(q,q')\le d(q_1 + e^{b} i, q_1' + e^{b} i) + d(q_1' + e^{b} i, q_1' + e^{b'} i) \le \frac{|q_1'-q_1|}{e^b} + |b'-b|.
 	\]
 	 We also notice that $\ell= \lambda e^{-b}$. Comparing the radii and the centers of the euclidean half-circles given by the geodesics generated by $\tilde u$ and $\tilde v$ (see Figure \ref{pic2}), we obtain the following estimates:
 	 \begin{enumerate}
 	 	\item $0\le 2e^{b'} -2 e^b <\lambda$,\label{eq1}
 	 	\item $0\le q_1'-q_1 <\lambda$.\label{eq2}
 	 \end{enumerate}

	\begin{figure}[h]
	\centering
	\includegraphics[scale=1.5]{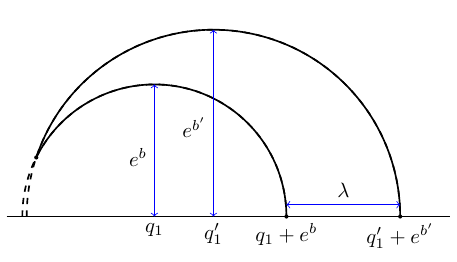}
	\caption{Radii and centers of the half-circles associated to $\tilde u$ and $\tilde v$.} 
	\label{pic2}
\end{figure}
  
  From (\ref{eq1}), we obtain $0\le b'-b < \log (1 +\ell/2) \le \ell/2$, and (\ref{eq2}) implies $0\le (q_1'-q_1)e^{-b}< \ell$. Hence,
  \[
  d(q,q')\le \frac{3}{2} \ell.
  \]
 	
%
 	
 	Moreover, by a triangular inequality,
 	\[
 	|t_1 - t'_1 | \le d(q,q' ) \le \frac{3}{2} \ell.
 	\]

 	We now bound the distance between the basepoints.
 	
 	\noindent\textbf{Case 1.}  If $0\le t\le t_1$, then 
 	\[
 	d\bigl(\tilde v(t),\tilde u(t)\bigr)\le 3\ell.
 	\] 

 Since the distance in $\mathbb{H}$ between two geodesic rays starting at the same point is increasing, it follows 
 \[
  	\forall\,0\le t\le t_1,\qquad
  	d\bigl(\tilde v(t),\tilde u(t)\bigr)\le d\bigl(\tilde v(t_1),\tilde u(t_1)\bigr).
  	\]
Using the bounds established above, one obtains
 	\[
 	d\bigl(\tilde v(t_1),\tilde u(t_1)\bigr) \le 
 	|t_1 - t'_1 | + d\bigl(\tilde v(t'_1),\tilde u(t_1)\bigr) = d\bigl(q',q\bigr) \le
 	3\ell,
  	\]
which implies the statement.
 	
%
 	
 	\smallskip
 	\noindent\textbf{Case 2:} If $ t\ge  t_1$, then 
 	\[
 	d\bigl(\tilde v(t), p \tilde u(t)\bigr)\le 4\ell.
 	\] 
 	
 	Since $\tilde v$ and $p \tilde u$ have the same point at infinity, the distance $d\bigl(\tilde v(t), p \tilde u(t)\bigr)$ is decreasing, and hence for $t\ge t_1$,
 	\[
 	d\bigl(\tilde v(t), p\tilde u(t)\bigr)
 	\le d\bigl(\tilde v(t_1),p \tilde u(t_1)\bigr).
 	\]
 	Moreover,
 	\[
 	d\bigl(\tilde v(t_1), p \tilde u(t_1)\bigr)
 	\le |t_1-t_1'| + d\bigl(\tilde v(t_1'), p \tilde u(t_1) \bigr) \le |t_1-t_1'| + d(q', q) + d(q,pq)
 	\]
 	By the bounds proved above, the first two terms on the right hand side are bounded by $3\ell/2$. The last term is bounded by $\ell$ by definition.
 	We conclude that, for $t\ge t_1$, $d\bigl(\tilde v(t), p \tilde u(t)\bigr)  \le 4\ell$. 
 	
 	\smallskip
 	
 Let us now compute the bounds for the distance $d_1$. Recall that 
 \[
d_1\!\left(g_{t} \tilde v ,\, g_t\tilde u\right) = d(\tilde v (t), \tilde u (t)) + d(\tilde v (t+1), \tilde u (t+1)). 
 \]
 We can distinguish three scenarios:
 
 \begin{enumerate}[(a)]
 	\item If $t \le t_1 -1 $, by the first case, we have that both
 	\[
 	d(\tilde v(t), \tilde u(t))
 	\quad \text{and} \quad
 	d(\tilde v(t+1), \tilde u(t+1))
 	\]
 	are bounded by $3\ell$. Then
 	\[
 	d_1(g_{t}\tilde v, g_t \tilde u) \le 6\ell.
 	\]
 	
 	\item If $t \ge t_1$, by the second case, we can
 	bound both
 	\[
 	d(\tilde v(t), p \tilde u(t))
 	\quad \text{and} \quad
 	d(\tilde v(t+1), p \tilde u(t+1))
 	\]
 	by $4\ell$. Then
 	\[
 	d_1(g_{t}\tilde v, p g_t \tilde u) \le 8 \ell.
 	\]
 	
 	\item Let $t_1 -1 < t < t_1$. We have
 	\begin{align*}
 		d_1(g_{t}\tilde v, g_t \tilde u) &= d(\tilde v (t), \tilde u (t)) + d(\tilde v (t+1), \tilde u (t+1)) \\
 		&\le  d(\tilde v (t), \tilde u (t)) + d(\tilde v (t+1), p \tilde u (t+1)) + d( p \tilde u (t+1), \tilde u (t+1)).
 	\end{align*}
	The first and the second terms in the last line are bounded by $3\ell$ and $4 \ell$ by Cases 1 and 2 above, respectively. 
 	
	In order to bound the third term, we write $\tilde u (t+1) =x_1 + e^{b_1} i$. Recalling that $q=\tilde u (t_1)= q_1 +e^b i$, we have
	\[
	0<b-b_1= d(\tilde u (t+1), \tilde H)\le d(\tilde u (t+1), q) \le 1.
	\]
	Then \[
	d(p \tilde u (t+1), \tilde u (t+1)) \le \lambda e^{-b_1} \le \lambda e^{-b +1} = e \,\ell\le 3 \ell.
	\] We conclude that, if $t_1 -1 < t < t_1$, then 
	\[
	d_1(g_{t}\tilde v, g_t \tilde u) \le 10 \ell(\gamma).
	\]

 \end{enumerate}

Finally, we observe that 
\begin{align*}
	d_1\!\left(g_{t+\tau} \tilde v ,\, g_t\tilde u\right) & \le d_1\!\left(g_{t+\tau} \tilde v ,\, g_t\tilde v\right) + d_1\!\left(g_{t} \tilde v ,\, g_t\tilde u\right)\\
	& =  2|\tau| + d_1\!\left(g_{t} \tilde v ,\, g_t\tilde u\right) \\
	&\le  2\ell + d_1\!\left(g_{t} \tilde v ,\, g_t\tilde u\right),
\end{align*}
thanks to Proposition \ref{bound_wind}. Similarly we have
\[
d_1\!\left(g_{t+\tau} \tilde v ,\, p g_t\tilde u\right) \le 2\ell + d_1\!\left(g_{t} \tilde v ,\, p g_t\tilde u\right).
\]
Together with the previous bounds, these imply the statement.
 \end{proof}


\section{Proof of Theorem \ref{thm2}}

\subsection{Cusp-recurrent vectors}

\begin{proposition}\label{cusprecurrent}
	Let $S= \Gamma \backslash \mathbb{H}$ be a hyperbolic surface with at least one cusp. Let $u\in T^1S$ be a cusp-recurrent vector and let $\tilde u$ be a lift to $T^1 \H$. Then there exists a sequence of pairs $(\tilde H_n, p_n)$ such that
	\begin{enumerate}
		\item $\tilde u$ is tangent to the oriented pair $(\tilde H_n, p_n)$,
		\item the tangency point is $\tilde u(t_n)$, where $t_n$ form an increasing sequence of non-negative numbers tending to $+\infty$,
		\item the fixed points $x_n$ of the isometry $p_n$ are distinct and form a monotonic sequence converging to $\tilde u(+\infty)$,
		\item the horoballs bounded by $\tilde H_n$ are pairwise disjoint,
		\item $\ell(\tilde H_n , p_n)$ tends to $0$.
	\end{enumerate}
\end{proposition}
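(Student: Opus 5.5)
Fix a cusp of $S$: a parabolic $p_0\in\Gamma$ with fixed point $x_0$ and an embedded horoball $B_0$ centered at $x_0$, so that $\Gamma_{p_0}\backslash B_0$ projects isometrically into $S$. Shrinking $B_0$ if necessary, all its $\Gamma$-translates $\gamma B_0$ are either equal or disjoint. Cusp-recurrence says that the ray $u(\R^+)$ meets closed horocycles of arbitrarily small length. On each cusp the length of the closed horocycle decreases exponentially with the depth. So we obtain times $s_k\to+\infty$ at which $u(s_k)$ lies in a cusp region at depth $D_k\to+\infty$. Since there are finitely many cusps to choose from, or in any case after passing to a subsequence, we may work with cusp regions whose closed boundary horocycles have length bounded by a uniform constant $L$. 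Lifting, $\tilde u(s_k)$ lies in a horoball $\gamma_k B_0$ (or in a translate of some fixed cusp horoball) centered at $x_k'=\gamma_k x_0$.

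The ray does not diverge through a cusp, so it must exit each such horoball. Inside a horoball, a geodesic that enters and exits follows the picture in the upper half-plane model: with the horoball centered at $\infty$, the geodesic is a half-circle. Its highest point is the tangency point with a unique horocycle $\tilde H_k$ centered at $x_k'$, and its height is at least the depth reached. Take $p_k=\gamma_k p_0^{\pm1}\gamma_k^{-1}$, choosing the sign so that the direction of $\tilde u$ at the tangency point agrees with the positive orientation. Then $\tilde u$ is tangent to the oriented pair $(\tilde H_k,p_k)$. Moreover $\ell(\tilde H_k,p_k)\le L e^{-D_k}\to0$, because $\tilde H_k$ lies at depth at least $D_k$ inside $\gamma_k B_0$. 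Call the tangency time $t_k$; it satisfies $t_k\ge s_k$ up to a bounded error. Passing to a subsequence, we may assume $t_k$ is increasing, nonnegative and tends to $+\infty$. The horoballs bounded by $\tilde H_k$ lie inside the $\gamma_k B_0$. If we pick the excursions in different visits to cusp regions (that is, in distinct translates $\gamma_k B_0$, which happens as soon as the $s_k$ are separated by an exit of the ray), these translates are pairwise disjoint. This gives (1), (2), (4) and (5).

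For (3), the centers $x_k'$ are distinct because the horoballs are disjoint. They converge to $\tilde u(+\infty)$ because $\tilde u(t_k)$ lies on $\tilde H_k$ while $\tilde u(t_k)\to\tilde u(+\infty)$. The Euclidean size of a horoball meeting the ray deep inside (in a disk model, or after normalizing $\tilde u(+\infty)\ne\infty$) shrinks to zero, so its base point converges to the same boundary point. Finally, $\partial\H\setminus\{\tilde u(+\infty)\}$ near $\tilde u(+\infty)$ has two sides, so some side contains infinitely many $x_k'$. Within that side we extract a monotone subsequence, and monotonicity is preserved by the subsequence that already made the $t_k$ increasing.

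The main obstacle is making precise the link between ``intersects closed horocycles of arbitrarily small length'' and ``goes deep into a cusp and comes back out through an honest tangency''. One must check that a deep incursion produces a tangency with a horocycle of small length, not just a crossing, and that distinct incursions give disjoint horoballs. I would handle this by the explicit half-circle computation in the upper half-plane, together with the standard fact that cusp neighborhoods of a fixed small size are embedded and pairwise disjoint.
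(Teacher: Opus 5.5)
Your strategy is essentially the paper's, but one step needs repair. Both arguments lift the short closed horocycles met by $u(\R^+)$, replace each lift by the unique horocycle with the same center tangent to $\tilde u(\R)$ (whose length is no larger), choose $p_k^{\pm1}$ to fix the orientation, and extract subsequences.

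The main difference is how you get (3) and (4). You use that cusp horoballs of a fixed small boundary length (e.g.\ at most $1$, by Shimizu's lemma) are pairwise equal or disjoint across all cusps, so distinct excursions automatically give disjoint horoballs with distinct centers. The paper instead rules out a repeated center $\xi$ by a direct contradiction (lengths tending to $0$ would force $\tilde u(t_n^0)\to\xi$). It then reads off monotonicity from the normalization $\tilde u(0)=i$, $\tilde u(+\infty)=\infty$, in which $x_n=e^{t_n}$, and gets disjointness by a further extraction. Your route is more conceptual; the paper's is more elementary. Your initial reduction to a single cusp $B_0$ is unavailable when $S$ has infinitely many cusps, which the statement allows, so you really need the uniform-size version you fall back on.

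The step that is false as written is ``$t_k\ge s_k$ up to a bounded error''. Along the chord of $\tilde u(\R)$ through a horoball, the depth increases up to the tangency point and then decreases. So if $s_k$ lies on the exiting side of an excursion that reached depth $M_k\gg D_k$, then $s_k-t_k\approx M_k-D_k$, which is unbounded in general. Since (1) requires the tangency to occur on the forward ray, i.e.\ $t_k\ge 0$, you need another argument. The paper's works:
\begin{itemize}
\item $\tilde u(t_k)$ lies at depth at least $D_k$, so the injectivity radius at $u(t_k)$ is at most $\ell(\tilde H_k,p_k)\to0$, giving $|t_k|\to\infty$.
\item If $t_k\le 0\le s_k$, monotonicity of depth puts $\tilde u(0)$ at depth at least $D_k$ in that horoball. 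This can happen for only finitely many $k$.
\end{itemize}
Similarly, discard the horoball centered at $\tilde u(-\infty)$, if there is one: it admits no tangent horocycle and contains only finitely many $\tilde u(s_k)$. With this repair your argument goes through.
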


\begin{proof}
	By definition, there exists a sequence of closed horocycles $(H_n)_{n\in \N}$ in $S$ with lengths $\ell_n$ converging to $0$ and a sequence of non-negative times $(t_n)_{n\in \N}$ such that $u(t_n)$ belongs to $H_n$.
	
	
	Then, for every $n\in N$, $\tilde u(\reals^+)$ intersects a lift $\tilde H_n^0$ of the horocycle $H_n$ at the point $q_n^0:=\tilde u (t_n^0)$. There exists an isometry $p_n$ preserving the horocycle $\tilde H_n^0$ and such that $\ell(\tilde H_n^0 , p_n) = \ell_n$. Let $x_n$ denote the fixed point of $p_n$ in $\partial \H$, which is also the center of $\tilde H_n^0$.
	
	If a vector points to a parabolic fixed point, then its associated geodesic ray in the quotient diverges through a cusp. We deduce that $x_n$ is different from $\tilde u(+\infty)$, since $u(\reals^+)$ does not diverge through a cusp. Moreover, we next show that $x_n$ cannot take the same value infinitely often. Assume that, after taking a subsequence, $x_n=\xi \in \partial \H$ for all $n\in \N$. The stabilizer of $\xi$ in $\Gamma$ is generated by a parabolic isometry $p$ and $p_n$ is either $p$ or $p^{-1}$. Since $\ell (\tilde H_n^0 , p_n)$ tends to $0$, then $\tilde H_n^0$ is a sequence of horocycles centered at $\xi$ and converging to $\xi$ when $n\to +\infty$. Since $q_n^0\in \tilde H_n^0$, this implies that the geodesic ray generated by $\tilde u$ points to $\xi$, obtaining a contradiction.
	
	Up to considering a subsequence, we can assume that $x_n$ is also different from $\tilde u(-\infty)$. For every $n$, $\tilde u(\reals)$ is a geodesic intersecting at least once $\tilde H_n^0$ whose endpoints do not coincide with the center of $\tilde H_n^0$. Then, there exists a unique horocycle $\tilde H_n$ in the closed horoball bounded by $\tilde H_n^0$ which is tangent to the ray $\tilde u (\reals )$. Let $q_n=\tilde u (t_n)$, $t_n\in \reals$, be the tangency point. Moreover, $\ell(\tilde H_n,p_n)\le\ell(\tilde H_n^0,p_n)=\ell_n$.  
	
	The injectivity radius of $S$ at the point $u(t_n)$ is bounded above by $\ell(\tilde H_n,p_n)$, which tends to $0$. This implies that $t_n$ must tend to $+\infty$. Let us show that $t_n$ is positive for all but finitely many $n$. If that is not the case, up to taking a subsequence, assume that $t_n \le 0$ for all $n$. Then $\tilde u (0)$ belongs to the closed horoball bounded by $\tilde H_n^0$. This implies that the injectivity radius at $u(0)$ is less than $\ell(\tilde H_n^0,p_n)$ for all $n$, which is a contradiction. Therefore, up to considering a subsequence, we can assume that $(t_n)_n$ is an increasing sequence of nonnegative times that tends to $+\infty$.
	
	Up to applying an isometry of $\mathbb{H}$ we can assume that $\tilde u(0)=i$ and $\tilde u(+\infty) =\infty$. In the half-plane model, $\tilde H_n$ is an euclidean circle tangent to both the real and the imaginary axis at the points $x_n$ and $\tilde u(t_n)$. Up to taking the reflection with respect to the real axis and a subsequence, we assume that the horocycles $\tilde H_n$ are in the first quadrant. Since $t_n$ is increasing and goes to infinity and $x_n=e^{t_n}$, then $x_n$ is an increasing sequence tending to $+\infty$. 
	
	Up to extracting subsequences, we can also assume that the horoballs bounded by $\tilde H_n$ are pairwise disjoint. Finally, up to changing $p_n$ by its inverse, we can assume that $\tilde u$ is tangent to the each oriented pair $(\tilde H_n,p_n)$. In Figure \ref{pic3} we represent the ray of $\tilde u$ and the horocycles $\tilde H_n$.
\end{proof}

\begin{figure}[h]
	\centering
	\includegraphics[scale=1]{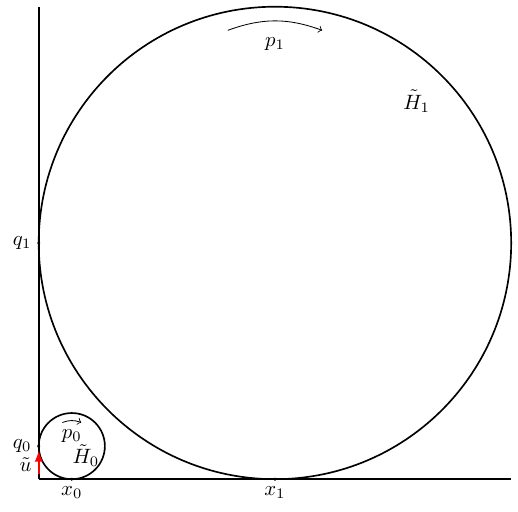}
	\caption{Position of the sequence of horocycles $\tilde H_n$.} 
	\label{pic3}
\end{figure}

From now on, we fix a cusp-recurrent vector $u\in T^1 S$. Up to conjugating the group $\Gamma$, we assume that $\tilde u$ is a lift of $u$ satisfying $\tilde u(0) =i$ and $\tilde u(+\infty)=\infty$. We also fix a sequence of pairs $(\tilde H_n , p_n)$ given by Proposition \ref{cusprecurrent}.

Let $\alpha= (\alpha_n)_{n\in \N}$ be a subsequence of $(p_n)_{n\in \N}$. More precisely, let $k_n$ be the increasing sequence of nonnegative integers such that $\alpha_n = p_{k_n}$. Then we denote
\[
x_n^\alpha = x_{k_n}, \, \tilde H_n^{\alpha} =\tilde H_{k_n} \text{ and } \ell^{\alpha}_n =\ell(\tilde H_{k_n}, p_{k_n}).
\]

\subsection{Construction of $w_\alpha$ in $\overline{H^{ss} u }$}

\par 	
	Let $(\alpha_n)_{n\in\\N}$ be a subsequence of $(p_n)_{n\in \N}.$ We introduce $\beta_n=\alpha_0\ldots\alpha_n\in\Gamma$ and $\tilde v_n\in T^1\H$ defined by:
	\begin{itemize}
		\item $\tilde v_n(0)=i,$
		\item $\tilde v_n(+\infty)=\beta_n(\infty)$.
	\end{itemize}

\begin{lemma}\label{lemma_vectors}
	The sequence $(\tilde v_n)_{n\in\N}$ converges towards a vector $\tilde v_\alpha\in T^1 \H $ defined by $\tilde v_\alpha (0)=i$ and $\tilde v_\alpha(+\infty)=\lim\limits_{n\to +\infty}\beta_n(\infty)$.
	
\end{lemma}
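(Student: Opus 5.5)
Since a vector of $T^1\H$ is determined by its basepoint and its forward endpoint, and that map is continuous, it suffices to show that the points $\beta_n(\infty)\in\partial\H$ converge. Then $\tilde v_n\to\tilde v_\alpha$, with $\tilde v_\alpha(0)=i$ and $\tilde v_\alpha(+\infty)=\lim\beta_n(\infty)$. I plan to prove convergence by a nested-interval argument in $\R$, using the geometry of Proposition \ref{cusprecurrent}. We are in the normalized position $\tilde u(0)=i$, $\tilde u(+\infty)=\infty$. The horoballs $\tilde B_n$ bounded by $\tilde H_n$ are pairwise disjoint Euclidean disks tangent to $\R$ at $x_n=e^{t_n}$, with $x_n$ increasing to $+\infty$. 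Each $p_n$ fixes $x_n$ and preserves $\tilde B_n$, and with the chosen orientation it moves points of $\partial\H\setminus\{x_n\}$ toward $x_n$ from the side dictated by tangency, as in the proof of Proposition \ref{key_prop}.

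\textbf{Step 1.} I would introduce the closed "shadow" intervals $I_n\subset\R$: the set of points $\xi$ such that the geodesic from $\infty$ to $\xi$ enters $\tilde B_{k_n}$, i.e. $I_n=[x^\alpha_n-r_n,x^\alpha_n+r_n]$, where $r_n$ is the Euclidean radius of $\tilde B_{k_n}$. Since the horoballs are disjoint and ordered, the $I_n$ are pairwise disjoint; if they are not, I would pass to a further subsequence in Proposition \ref{cusprecurrent}. The key claim is that $\alpha_n$ maps the complement of the open interval $\mathrm{int}(I_n)$, including $\infty$, into $I_n$. Indeed, a parabolic element preserving a horoball maps any geodesic that leaves the horoball's shadow to a geodesic ending inside it. I would check this directly in coordinates where $x_n=\infty$, $p(z)=z+\lambda$ and the horoball is $\{\mathrm{Im}\, z>h\}$. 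A small translation ($\ell\le 1$ say, which holds for large $n$) sends the complement of the shadow into the shadow, and the shadow is a fundamental-domain-type interval of length about $\ell$.

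\textbf{Step 2.} I would then show by induction that $\beta_n(\infty)=\alpha_0\cdots\alpha_n(\infty)$ lies in the nested intervals $J_n:=\alpha_0\cdots\alpha_{n-1}(I_n)$. We have $\alpha_n(\infty)\in I_n$. Because $I_{n+1}$ lies outside $I_n$, $\alpha_n(I_{n+1})\subset I_n$, so $J_{n+1}\subset J_n$. The nesting follows by applying $\alpha_0\cdots\alpha_{n-1}$.

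\textbf{Step 3 (main obstacle).} It remains to show that the Euclidean diameter of $J_n$ tends to $0$, so that the intersection is a single point. My plan is to use contraction. $\alpha_n$ maps the complement of $I_n$ into an interval that is small compared with $I_n$, since $\ell^\alpha_n\to0$. In the horoball-based model, the image of the complement of the shadow is strictly contained in it. Hence each map $\alpha_0\cdots\alpha_{n-1}$ restricted to $I_n$ should contract by a factor bounded away from $1$, for instance in the Hilbert/cross-ratio metric on the intervals. Alternatively, one can argue hyperbolically. The geodesics from $i$ to points of $J_n$ all pass through the horoball $\alpha_0\cdots\alpha_{n-1}\tilde B_{k_n}$, and I would show that this horoball's distance to $i$ tends to infinity, for instance because it is separated from $i$ by the $n$ disjoint horoballs $\alpha_0\cdots\alpha_{j-1}\tilde B_{k_j}$, $j<n$. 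A family of geodesics from $i$ that all pass through a horoball far from $i$ has endpoints in a shrinking visual interval. I expect justifying that distance growth cleanly to be the delicate point. Once it is done, $\beta_n(\infty)$ converges, and $\tilde v_n\to\tilde v_\alpha$ follows.
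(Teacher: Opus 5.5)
There is a genuine gap: Steps 1--2 rest on false geometric claims, so the problem is not confined to Step 3. Your reduction to the convergence of $\beta_n(\infty)$ is fine.

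\textbf{What fails in Steps 1--2.} In the normalization of Proposition \ref{cusprecurrent}, each $\tilde H_{k_n}$ is tangent to the real axis at $x^\alpha_n$ and also to the imaginary axis. So its Euclidean radius equals $x^\alpha_n$, and its shadow from $\infty$ is $I_n=[0,2x^\alpha_n]$. These intervals are nested and all contain $0=\tilde u(-\infty)$; no subsequence makes them disjoint.

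The ping-pong claim also runs the wrong way for short horocycles. Conjugate by $z\mapsto -1/(z-x^\alpha_n)$. The horocycle becomes $\{\mathrm{Im}\, w=h\}$ with $h=1/(2x^\alpha_n)$, the original $\infty$ goes to $0$, and the complement of the interior of the shadow becomes $[-2h,2h]$. The map $\alpha_n$ becomes a translation of length $\lambda=\ell^\alpha_n h$. It sends $[-2h,2h]$ into the shadow only if $\lambda\ge 4h$, i.e.\ $\ell^\alpha_n\ge 4$, which is the opposite of your regime. Concretely, $\alpha_n(\infty)=x^\alpha_n(1+2/\ell^\alpha_n)$, which lies outside $I_n$ as soon as $\ell^\alpha_n<2$, hence for all large $n$.

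So neither $\alpha_n(\infty)\in I_n$ nor $J_{n+1}\subset J_n$ holds. The contraction argument of Step 3 therefore has nothing to work on. You also leave it unproved, and it would be indispensable in your scheme, because $\beta_n(\infty)$ is not an endpoint of $J_n$.

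\textbf{The missing idea.} No shrinking is needed; the order on $\mathbb{R}$ does everything. Since $\alpha_i$ is parabolic with fixed point $x^\alpha_i$ and $\alpha_i(\infty)>x^\alpha_i$, it maps $(x^\alpha_i,+\infty)$ increasingly into $(x^\alpha_i,\alpha_i(\infty))$. As $x^\alpha_n>x^\alpha_{n-1}$, this gives $x^\alpha_{n-1}<\alpha_{n-1}\alpha_n(\infty)<\alpha_{n-1}(\infty)$. Now apply $\alpha_{n-2},\dots,\alpha_0$ in turn; each is increasing on the relevant half-line because the $x^\alpha_i$ are increasing. This yields $x^\alpha_0<\beta_n(\infty)<\beta_{n-1}(\infty)$, and a decreasing sequence bounded below converges. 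This is the paper's proof.

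In your nested-interval language, the right sets are $\beta_{n-1}\bigl((x^\alpha_n,+\infty]\bigr)$. They are nested and have $\beta_{n-1}(\infty)$ as right endpoint, which gives monotonicity directly.
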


\begin{proof}
	We have to prove that the sequence $(\beta_n(\infty))_{n\in\N}\subset \partial \H$ converges. Let $x_n^{\alpha}>0$ denote the fixed point of $\alpha_n$. 
	Using the dynamics of $\alpha_n$ we have
	\begin{itemize}
		\item $\forall n\geq 0$, $x_n^{\alpha} < \alpha_n\infty$,
		\item $\forall n\geq 1$, $x_{n-1}^{\alpha}<\alpha_{n-1}\alpha_n\infty<\alpha_{n-1}\infty$,
		\item $\forall i\geq 0,$ if $x_i^{\alpha}<x<y$, then $x_i^{\alpha}<\alpha_i x<\alpha_iy.$
	\end{itemize}
	It follows that $(\beta_n \infty )_{n\in\N}$ is a decreasing sequence of positive numbers and hence converges.

 \end{proof}



We observe that $\tilde v_0$ is the winding of $\tilde u$ around the pair $(\tilde H^{\alpha}_0, \alpha_0)$. We next explore the relation of the vectors $\tilde v_n$, $n\ge 1$, with the notion of winding.

\begin{lemma}
	For every $n\ge 0$, the geodesic ray generated by $\beta_n^{-1} \tilde v_n$ intersects the horocycle $\tilde H_{n+1}^{\alpha}$.
\end{lemma}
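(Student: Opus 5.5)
The plan is to turn the statement into a membership condition for the single point $\beta_n^{-1}i$, and to prove that condition by a ping-pong induction on $n$ using the dynamics of the parabolic elements $\alpha_k$.

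\textbf{Reduction to a point condition.} Since $\tilde v_n(0)=i$ and $\tilde v_n(+\infty)=\beta_n(\infty)$, the ray generated by $\beta_n^{-1}\tilde v_n$ is the vertical Euclidean half-line going up from $w_n:=\beta_n^{-1}i$ to $\infty$. In our normalization ($\tilde u(0)=i$, $\tilde u(+\infty)=\infty$, horocycles in the first quadrant), $\tilde H_k$ is tangent to the imaginary axis at $\tilde u(t_k)=e^{t_k}i$ and to the real axis at $x_k=e^{t_k}$. Hence $\tilde H_{n+1}^{\alpha}$ is the Euclidean circle of centre $x^\alpha_{n+1}+ix^\alpha_{n+1}$ and radius $x^\alpha_{n+1}$. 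A vertical half-line starting at $w$ meets this circle exactly when $0\le \operatorname{Re} w\le 2x^\alpha_{n+1}$ and $w$ lies below the upper arc of the circle. The geodesic $D_{n+1}$ with endpoints $0$ and $2x^\alpha_{n+1}$ is the semicircle of height $\sqrt{s(2x^\alpha_{n+1}-s)}$ over abscissa $s$. This is below the upper arc of the horocycle, which has height $x^\alpha_{n+1}+\sqrt{s(2x^\alpha_{n+1}-s)}$. So it suffices to prove
\[
w_n=\beta_n^{-1} i\in \Omega_{n+1}:=\text{the closed half-disc bounded by } D_{n+1}.
\]

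\textbf{Ping-pong induction.} Write $\beta_n^{-1}=\alpha_n^{-1}\beta_{n-1}^{-1}$. Each $\alpha_k$ is parabolic with fixed point $x_k^\alpha>0$, and $x_k^\alpha<\alpha_k\infty$ (proof of Lemma \ref{lemma_vectors}). Hence $\alpha_k^{-1}$ translates points of $\partial\H$ towards $x^\alpha_k$ from the right, and $\alpha_k^{-1}\infty<x^\alpha_k$. I will pick half-planes $P_k$ containing $i$ and a neighbourhood of $\infty$, for instance the complement of a half-disc over an interval around $x^\alpha_k$, chosen so that:
\begin{enumerate}
\item[(a)] $\alpha_k^{-1}(P_k)\subset \Omega_{k+1}\setminus\{\text{small neighbourhood}\}$, which reduces by the dynamics above to an inequality on $\alpha_k^{-1}$ of the endpoints of $P_k$;
\item[(b)] $\Omega_{k}\subset P_{k}$, using that the horoballs $\tilde H_k$ are pairwise disjoint and the sequence $x_k$ increases (Proposition \ref{cusprecurrent}).
\end{enumerate}
Then $i\in P_0$ gives $w_0\in\Omega_1$. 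If $w_{n-1}\in\Omega_n\subset P_n$, then $w_n=\alpha_n^{-1}w_{n-1}\in\Omega_{n+1}$. The base case $n=0$ can also be checked directly: $\tilde v_0=\mathrm{Wind}(\tilde u)$, and $\alpha_0^{-1}i$ lies below $\tilde H_0^\alpha$ near $x_0^\alpha$, hence inside $\Omega_1$.

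\textbf{Main obstacle.} The delicate step is choosing the intervals defining $P_k$ so that (a) and (b) hold at the same time. The images of $\Omega_k$ under $\alpha_k^{-1}$ must land between $0$ and $2x^\alpha_{k+1}$, and this has to be controlled only by the facts $x_k<x_{k+1}$, disjointness of the horoballs, and $\ell_k\to0$. My first attempt is to take $P_k$ to be the exterior of the geodesic joining $\alpha_k(0)$ to $\alpha_k(2x^\alpha_{k+1})$. This makes (a) automatic. It then remains to verify that $\Omega_k$, whose endpoints are $0$ and $2x^\alpha_k$, stays outside this geodesic, i.e.\ $2x^\alpha_k\le \alpha_k(0)$ or a similar interval separation.

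I am not sure this inequality follows from the listed properties alone. If it fails, I will pass to a further subsequence in Proposition \ref{cusprecurrent}, where $x_{k+1}/x_k\to\infty$ and $\ell_k\to0$. Since the horocycle at $x_k$ then has Euclidean size comparable to $x_k$ while the translation of $\alpha_k$ along it is of order $\ell_k x_k$, this should force the required separation.
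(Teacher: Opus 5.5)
\textbf{There is a genuine gap: the target region $\Omega_{n+1}$ is too small, so the inclusion you try to prove can fail.} Your reduction is valid as a sufficient condition. But the inclusion $w_n\in\Omega_{n+1}$ does not follow from the hypotheses. The half-disc $\Omega_{n+1}$ contains no point of the imaginary axis, and near the axis only points of height at most about $\sqrt{2x^\alpha_{n+1}\,\mathrm{Re}\,w}$. The point $w_0=\alpha_0^{-1}i$, by contrast, is obtained by sliding $i$ along the horocycle centred at $x_0^\alpha$ through $i$. It lies near $i$, not ``near $x_0^\alpha$'' as your base case asserts. Concretely, suppose $x_0^\alpha=1$ (allowed, since only $t_n\ge 0$ is known) and write $\ell=\ell_0^\alpha$. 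Then $w_0$ is the point of $\tilde H_0^\alpha$ at horocyclic distance $\ell$ below $q_0^\alpha=i$:
\[
w_0=\frac{\ell^2+2i}{2+2\ell+\ell^2}.
\]
Hence $w_0\in\Omega_1$, i.e.\ $|w_0|^2\le 2x_1^\alpha\,\mathrm{Re}\,w_0$, holds if and only if $x_1^\alpha\ge \frac{4+\ell^4}{2\ell^2(2+2\ell+\ell^2)}\approx \ell^{-2}$. Nothing in Proposition \ref{cusprecurrent} gives such a relation; disjointness of the horoballs only yields $x^\alpha_{n+1}\ge(3+2\sqrt2)\,x^\alpha_n$. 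Yet the lemma is claimed for every subsequence $\alpha$, and in this situation the statement is still true: the vertical ray from $w_0$ hits the lower-left arc of $\tilde H^\alpha_1$.

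\textbf{The ping-pong sets are also on the wrong side.} For $\ell_k^\alpha<2$ one computes
\[
\alpha_k^{-1}(\infty)=x_k^\alpha\bigl(1-2/\ell_k^\alpha\bigr)<0,
\qquad
\alpha_k(0)=-\frac{x_k^\alpha\ell_k^\alpha}{2-\ell_k^\alpha}<0.
\]
This has three consequences. Any $P_k$ containing a neighbourhood of $\infty$ has $\alpha_k^{-1}(P_k)$ meeting $\{\mathrm{Re}\,z<0\}$, so (a) is impossible. The separation $2x_k^\alpha\le\alpha_k(0)$ you hope to verify is never true. And $\alpha_k^{-1}$ maps the exterior of the geodesic from $\alpha_k(0)$ to $\alpha_k(2x^\alpha_{k+1})$ onto the exterior of $D_{k+1}$, not into $\Omega_{k+1}$. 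The consistent choice is $P_k=\alpha_k(\Omega_{k+1})$, the half-disc over $[\alpha_k(0),\alpha_k(2x^\alpha_{k+1})]$. It makes (a) trivial, and (b) holds for small $\ell_k^\alpha$ by disjointness. But then $i\in P_0$ is exactly $w_0\in\Omega_1$, which fails as above. Passing to faster-growing subsequences would change the statement being proved.

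\textbf{The missing idea} is a region shaped by the horocycle that keeps the vertical side. The paper uses the closed region $R_n$ bounded by the segment $(0,q^\alpha_n]$ of the imaginary axis and the arc of $\tilde H^\alpha_n$ from $q^\alpha_n$ to $x^\alpha_n$. It has three properties:
\begin{itemize}
\item it contains $i$;
\item $R_n\subset R_{n+1}$, by homothety;
\item $\alpha_n^{-1}(R_n)\subset R_n$, because $\alpha_n^{-1}$ slides that arc towards $x^\alpha_n$ and sends the vertical side to a geodesic arc tangent to $\tilde H^\alpha_n$ that stays in $R_n$.
\end{itemize}
Induction then gives $\beta_n^{-1}i\in R_n\subset R_{n+1}$, and every vertical ray starting in $R_{n+1}$ meets $\tilde H^\alpha_{n+1}$.
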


\begin{proof}
For $n\ge 0$, $\tilde H_{n}^{\alpha}$ is an euclidean circle in the first quadrant tangent to the real and the imaginary axis at the points $x^\alpha_n$ and $q^\alpha_n$, respectively. Let $R_n$ be the closed region subtended by the geodesic ray between $q_n^\alpha$ and $0$ and the horocyclic arc of $\tilde H_{n}^{\alpha}$ between $q^\alpha_n$ and $x^\alpha_n$ in the first quadrant (see Figure \ref{pic4}). We notice that $R_n\subset R_{n+1}$ for all $n\ge 0$. The dynamics of $\alpha_n$ also imply that $\alpha_n^{-1} (R_n) \subset R_n$.

\begin{figure}[h]
	\centering
	\includegraphics[scale=1]{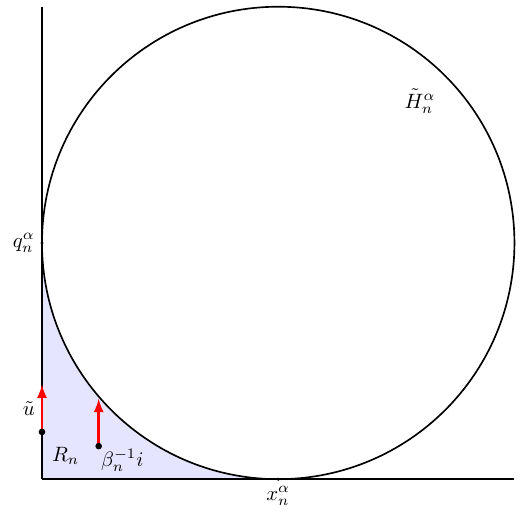}
	\caption{Region $R_n$ and ray of $\beta_n^{-1} \tilde v_n $.} 
	\label{pic4}
\end{figure}

By induction, one proves that $\beta_n^{-1} i \in R_n$ for all $n\ge 0$. Since the geodesic ray generated by $\beta_n^{-1} \tilde v_n$ joins $\beta_n^{-1} i$ to $\infty$, then it is a vertical half-line that intersects $\tilde H_{n+1}^{\alpha}$. 
\end{proof}

By the previous lemma, for every $n\ge 0$, there exists a unique horocycle $\tilde H_{n+1}'$ centered at $x_{n+1}^\alpha$ that is tangent to the geodesic ray $\beta_n^{-1} \tilde v_n (\reals^+)$ and
\begin{equation}\label{shrink_horo}
	\ell (\tilde H_{n+1}', \alpha_{n+1}) \le \ell (\tilde H_{n+1}^{\alpha}, \alpha_{n+1}).
\end{equation}

For $n\ge 0$, we can consider the winding of $\beta_n^ {-1}\tilde v_n$ around the pair $(\tilde H_{n+1}',\alpha_{n+1})$. By definition $\mathrm{Wind}_{( \tilde H'_{n+1}, \alpha_{n+1})}(\beta_n^{-1} \tilde v_n)$ is a vector based at $\beta_n ^{-1} i $ pointing at $\alpha_{n+1} \infty$. This shows that, for all $n\geq 0$,
$$
\tilde v_{n+1}=\beta_n \mathrm{Wind}_{(\tilde  H'_{n+1}, \alpha_{n+1})}(\beta_n^{-1} \tilde v_n).
$$

According to Proposition \ref{bound_wind}
\begin{equation}\label{winding_time_seq}
	\tau_{\tilde H_{n+1}', \alpha_{n+1}, \beta_{n}^{-1}\tilde v_n} = B_{\infty }(\alpha_{n+1}^{-1} \beta_n^{-1} i , \beta_n^{-1} i).
\end{equation}


We define \[
r_n := B_{\infty}(\beta_n^{-1} i , i ).
\]
We notice that $ g_{r_n} \tilde v_n \in \beta_n H^{ss} \tilde u$.

\begin{proposition}\label{convergence_winding_time}
	Let $(\alpha_n)_{n\in\\N}$ be a subsequence of $(p_n)_{n\in \N}$ such that \[ \sum_{n=0}^{+\infty} \ell_n^{\alpha} < +\infty.\] Then the sequence $(r_n)_{n\in \mathbb{N}}$ converges towards a real number $r_{\alpha}$. 
	Moreover, $|r_n|\le \sum_{i=0}^{+\infty} \ell_i^{\alpha}$ for all $n\ge 0$ and, hence, $|r_{\alpha}|\le \sum_{i=0}^{+\infty} \ell_i^{\alpha}$ 
\end{proposition}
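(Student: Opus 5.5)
The plan is to express $r_n$ as a telescoping sum of winding times and then bound each term with Proposition \ref{bound_wind}. Since $\beta_{n+1}=\beta_n\alpha_{n+1}$, we have $\beta_{n+1}^{-1}i=\alpha_{n+1}^{-1}\beta_n^{-1}i$. The Busemann cocycle identity then gives
\[
r_{n+1}-r_n = B_\infty(\alpha_{n+1}^{-1}\beta_n^{-1} i,\, \beta_n^{-1} i),
\]
which by \eqref{winding_time_seq} equals the winding time $\tau_{\tilde H'_{n+1},\alpha_{n+1},\beta_n^{-1}\tilde v_n}$. Here it matters that $\beta_n^{-1}\tilde v_n$ is based at $\beta_n^{-1}i$ and points to $\infty$, so the Busemann function in Definition \ref{def_winding_time} is indeed centered at $\infty$.

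For the first term, $r_0=B_\infty(\alpha_0^{-1}i,i)$. This is the winding time $\tau_{\tilde H_0^\alpha,\alpha_0,\tilde u}$, because $\tilde u(0)=i$, $\tilde u(+\infty)=\infty$, and $\tilde u$ is tangent to the oriented pair $(\tilde H_0^\alpha,\alpha_0)$ by Proposition \ref{cusprecurrent}. Proposition \ref{bound_wind} therefore gives $|r_0|\le \ell_0^\alpha$.

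For $n\ge 0$, I need $\beta_n^{-1}\tilde v_n$ to be tangent to the oriented pair $(\tilde H'_{n+1},\alpha_{n+1})$, and not merely tangent to $\tilde H'_{n+1}$ with the wrong orientation. To check this, note that $\beta_n^{-1}\tilde v_n$ is a vertical upward ray based in $R_n$, which lies to the left of $x^\alpha_{n+1}$. The ray $\tilde u$ has the same orientation relative to $\tilde H^\alpha_{n+1}$, and $\tilde H'_{n+1}$ is concentric with $\tilde H^\alpha_{n+1}$, so the orientation induced by $\alpha_{n+1}$ agrees. With that settled, Proposition \ref{bound_wind} together with \eqref{shrink_horo} gives
\[
|r_{n+1}-r_n|\le \ell(\tilde H'_{n+1},\alpha_{n+1})\le \ell^\alpha_{n+1}.
\]

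Summing, $|r_n|\le \sum_{i=0}^n \ell_i^\alpha\le\sum_{i=0}^{\infty}\ell_i^\alpha$. The increments are absolutely summable by hypothesis, so $(r_n)$ is Cauchy and converges to some $r_\alpha$, and the bound passes to the limit. The main obstacle is the orientation check in the previous paragraph, which is needed to apply Proposition \ref{bound_wind}. I would handle it with the explicit half-plane picture: both rays are vertical, pointing up, and sit to the left of the center $x^\alpha_{n+1}$.
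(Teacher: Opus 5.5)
Your proof is correct and follows the paper's argument. Like the paper, you identify $r_{n+1}-r_n$ with the winding time of $\beta_n^{-1}\tilde v_n$ around $(\tilde H'_{n+1},\alpha_{n+1})$, bound it by $\ell^\alpha_{n+1}$ via Proposition~\ref{bound_wind} and \eqref{shrink_horo}, and conclude by the Cauchy criterion and a telescoping sum; your explicit bound $|r_0|\le\ell_0^\alpha$ (via $r_0=\tau_{\tilde H_0^\alpha,\alpha_0,\tilde u}$) and your orientation check fill in steps the paper leaves implicit, and your final bound $\sum_{i=0}^{n}\ell_i^\alpha$ is the correctly indexed form of the paper's $\sum_{i=0}^{n-1}\ell_i^\alpha$.
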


\begin{proof}
	We have \[\left | r_{n+1}-r_n  \right |=\left | B_\infty(\alpha_{n+1}^{-1}\beta_n^{-1}(i), \beta_n^{-1}(i)) \right |  = |\tau_{\tilde H_{n+1}', \alpha_{n+1}, \beta_{n}^{-1}\tilde v_n} | \] by \ref{winding_time_seq}. By Proposition \ref{bound_wind} and Equation \ref{shrink_horo},
	it follows that
	$$
	\left | r_{n+1}-r_n  \right | \le  \ell(\tilde H_{n+1}', \alpha_{n+1})\leq \ell(\tilde H_{n+1}^{\alpha}, \alpha_{n+1}) = \ell^{\alpha}_{n+1}.$$
	Hence \[
	\left | r_{m}-r_n  \right |\le \sum_{i=n+1}^{m}\ell^{\alpha}_{i}.
	\]
	As a consequence $(r_n)_{n\in \N}$ is a Cauchy sequence and hence converges.
	
	Moreover, \[
	|r_n|\le |r_0| + \sum_{i=0}^{n-1} \left | r_{i+1}-r_i  \right | \le \sum_{i=0}^{n-1}\ell^{\alpha}_{i} \le  \sum_{i=0}^{+\infty}\ell^{\alpha}_{i}.
	\]
	Taking the limit when $n\to +\infty$ we conclude the last statement.
\end{proof}

For a subsequence $(\alpha_n)_{n\in\\N}$ of $(p_n)_{n\in \N}$ satisfying the assumptions of Proposition \ref{convergence_winding_time}, we define $\tilde w_\alpha$ as 
$$\tilde w_\alpha=g_{r_\alpha}\tilde v_\alpha=\lim\limits_{n\to \infty}g_{r_n}\tilde v_n.$$


\subsection{Construction of $w^\alpha$ in $W^{ss} u$}


From now on, we work on $T^1 S$. We let $v_n, v_{\alpha}, w_{\alpha}$ be the projections of $\tilde v_n, \tilde v_{\alpha}, \tilde w_{\alpha}$, respectively. The next result provides uniform bounds of the distance between the geodesic orbits of $v_n$ and $u$.

\begin{proposition}
	Let $\varepsilon>0$ and let $(\alpha_n)$ be a subsequence of $(p_n)$ such that
	\[
	\ell_n^\alpha < \frac{1}{12}\frac{\varepsilon}{4^n}.
	\]
	Then, for every $l\in \N$, there exists a time $T_l \ge 0$ such that, for all
	$n \in \mathbb{N}$,
	\begin{equation}\label{Pm}
			\forall t\ge T_l,\quad d_1\big(g_{t+r_n} v_n,\, g_t u\big)
		<
		\left(\sum_{k=0}^{n} \frac{1}{2^k}\right)\frac{\varepsilon}{2^l}.
	\end{equation}
\end{proposition}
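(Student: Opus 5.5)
The plan is to combine two estimates in $T^1S$. The first is a uniform one-step estimate between consecutive vectors $v_n$ and $v_{n+1}$, obtained from the Key Proposition \ref{key_prop}. The second is the exponential contraction along stable horocycles, Lemma \ref{distance_horo}, which controls the first finitely many vectors $v_0,\dots,v_l$. Telescoping the one-step estimate then handles all larger indices at once.

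\emph{Step 1 (one-step estimate).} Apply Proposition \ref{key_prop} to $\beta_n^{-1}\tilde v_n$, which is tangent to $(\tilde H'_{n+1},\alpha_{n+1})$, and recall $\tilde v_{n+1}=\beta_n\mathrm{Wind}_{(\tilde H'_{n+1},\alpha_{n+1})}(\beta_n^{-1}\tilde v_n)$. Projecting to $T^1S$, the isometries $\beta_n$ and $\alpha_{n+1}$ disappear, so whichever term realizes the minimum does not matter. Using also \eqref{shrink_horo}, we get for all $s\ge 0$
\[
d_1\bigl(g_{s+\tau_{n+1}}v_{n+1},\,g_s v_n\bigr)\le 12\,\ell(\tilde H'_{n+1},\alpha_{n+1})\le 12\,\ell^\alpha_{n+1}<\frac{\varepsilon}{4^{n+1}},
\]
where $\tau_{n+1}=\tau_{\tilde H'_{n+1},\alpha_{n+1},\beta_n^{-1}\tilde v_n}$. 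By \eqref{winding_time_seq} and the cocycle property of $B_\infty$, $\tau_{n+1}=r_{n+1}-r_n$. Substituting $s=t+r_n$ gives
\[
d_1\bigl(g_{t+r_{n+1}}v_{n+1},\,g_{t+r_n}v_n\bigr)<\frac{\varepsilon}{4^{n+1}}\qquad\text{whenever } t\ge -r_n.
\]
By Proposition \ref{convergence_winding_time}, $|r_n|\le L:=\sum_i\ell_i^\alpha$ for every $n$. Hence this holds for all $n$ simultaneously as soon as $t\ge L$.

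\emph{Step 2 (finitely many indices).} Fix $l$. For each $k\le l$ we have $g_{r_k}\tilde v_k\in\beta_k H^{ss}\tilde u$, so $g_{r_k}v_k\in H^{ss}u$ in $T^1S$. Lemma \ref{distance_horo}, applied at times $t$ and $t+1$, gives $d_1(g_{t+r_k}v_k,g_tu)\to0$ exponentially as $t\to+\infty$. Since only finitely many $k$ are involved, we can choose $T_l\ge L$ such that
\[
d_1(g_{t+r_k}v_k,g_tu)<\frac{\varepsilon}{2^l}\qquad\text{for all } k\le l \text{ and } t\ge T_l.
\]
For $n\le l$ this already gives \eqref{Pm}, because the sum $\sum_{k=0}^n 2^{-k}$ is at least $1$.

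\emph{Step 3 (large indices).} For $n>l$ and $t\ge T_l$, telescope Step 1 from index $l$ up to $n$:
\[
d_1(g_{t+r_n}v_n,g_tu)\le d_1(g_{t+r_l}v_l,g_tu)+\sum_{k=l+1}^{n}\frac{\varepsilon}{4^k}<\frac{\varepsilon}{2^l}+\sum_{k=l+1}^n\frac{1}{2^k}\cdot\frac{\varepsilon}{2^l}.
\]
Here we used $4^{-k}\le 2^{-l}2^{-k}$, valid because $k\ge l$. The right-hand side is at most $\bigl(\sum_{k=0}^n 2^{-k}\bigr)\varepsilon/2^l$, which is \eqref{Pm}.

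The main obstacle is the uniformity of $T_l$ in $n$. Horocyclic contraction alone gives, for each fixed $n$, a time that may grow with $n$. The resolution is to use contraction only for the finitely many indices $k\le l$, and to use the summable, time-independent winding bound of Step 1 for all later indices. The remaining care points are checking that $\tau_{n+1}=r_{n+1}-r_n$ and that the requirement $t\ge -r_n$ from the Key Proposition is met uniformly; the latter is exactly what the bound $|r_n|\le L$ from Proposition \ref{convergence_winding_time} provides.
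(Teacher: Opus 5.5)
Your proof is correct and follows the paper's argument: horocyclic contraction (Lemma \ref{distance_horo}) controls finitely many indices, and the summable one-step bound from Proposition \ref{key_prop}, valid for all $n$ at once when $t\ge\sum_i \ell_i^\alpha$, handles the rest. The paper packages this as an induction on $P_m$ with an explicit $T_l$ built from $D_{l-1}$, using contraction only for $n\le l-1$ and the Key Proposition directly for $n=l=0$; you unroll the same estimates into a direct telescoping sum, which is the same proof.
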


\begin{proof}
	We first observe that $\sum_{i=0}^{+\infty}\ell^{\alpha}_{i}\le \varepsilon/9$, so we are in the hypothesis of Proposition \ref{convergence_winding_time}.
	
	We start by defining the times $T_l$. 
	Recall that $\beta_n^{-1}\tilde v_n$ is a vector pointing at $\infty$
	and $\beta_n^{-1} g_{r_n}\tilde v_n \in H^{ss} \tilde u$.
	Let
	\[
	D_n = \max_{0 \le k \le n}
	d\big(\beta_k^{-1} g_{r_k}\tilde v_k(0),\, i\big).
	\]
	
	We now set $T_0 = \varepsilon/ 9$ and, for $l \ge 1$,
	\[
	T_l
	= \max \left(
	\log\!\left(
	\sinh\!\left(\frac{D_{l-1}}{2}\right)\cdot 2^{l+2} \cdot \varepsilon^{-1}
	\right), \frac{\varepsilon}{9} \right)
	\]
	
	We will show that this sequence of times satisfies the property \ref{Pm}
	of the statement.
	The proof is by induction. We say that the property $P_m$ is satisfied
	if \ref{Pm} is satisfied for all $n,l \in \{0,\dots,m\}$.
	The statement follows if we prove $P_m$ for all $m$.
	
	We start by showing that $P_0$ is satisfied.
	Recall that $\tilde v_0 = \mathrm{Wind}_{(\tilde H^{\alpha}_0, \alpha_0)}(\tilde u)$. By Proposition \ref{key_prop}, then we have, for all $t\ge 0$
	\[
	d_1(g_{t+r_0}v_0, g_t u) \le  12 \ell^{\alpha}_0 < \varepsilon.
	\]

	We now assume that $P_m$, $m\ge0$, is satisfied and show $P_{m+1}$. We need to show \ref{Pm} for $n,l\in \{0,\dots,m+1\}$. If we take $n,l\in \{0,\dots,m\}$ then \ref{Pm} is satisfied by the induction assumption.
	
	Let us first consider the case $n \in \{0,\dots,m\}$ and $l=m+1$. Observe that $g_{r_n} \beta_n^{-1}\tilde v_n$ belongs to the horocycle $H^{ss}\tilde u$. Then we can estimate the distance $d_1$ by the distance between the basepoints and use Lemma \ref{distance_horo}, after applying the inequality $x\le \sinh(x)$ for $x\ge0$. From the definition of the time $T_{m+1}$ we obtain that, for all $t\ge T_{m+1}$, 
	\begin{align*}
		d_1\big(g_{t+r_n} \beta_n^{-1} \tilde v_n,\, g_t \tilde u\big) \le & 
		2 d\big( \beta_n^{-1} \tilde v_n (t+r_n),\, \tilde u (t) \big) \le 
		4 \sinh (\frac{d( \beta_n^{-1} \tilde v_n (t+r_n),\, \tilde u (t) )}{2}) \\
		\le & 4 \sinh (\frac{d( \beta_n^{-1} \tilde v_n (r_n),\, \tilde u (0) )}{2}) e^{-t} \le 4 \sinh(\frac{D_m}{2}) e^{-t}	\le \frac{\varepsilon}{2^{m+1}}.	
	\end{align*}
Hence, for all $t\ge T_{m+1}$,
\[
d_1\big(g_{t+r_n} v_n,\, g_t u\big) \le \frac{\varepsilon}{2^{m+1}} \le \left(\sum_{k=0}^{n} \frac{1}{2^k}\right)\frac{\varepsilon}{2^{m+1}}.
\]
So far we have proved that \ref{Pm} is verified for any $n\in\{0,\dots,m\}$ and any $l\in \{0,\dots,m+1\}$. 

Finally, let us consider the case that $n=m+1$ and $l\in \{0,\dots,m+1\}$. 
Recall that \[\tilde v_{m+1}=\beta_m \mathrm{Wind}_{(\tilde  H'_{m+1}, \alpha_{m+1})}(\beta_m^{-1} \tilde v_m)\] and $\tau_{\tilde H_{m+1}', \alpha_{m+1}, \beta_{m}^{-1}\tilde v_m} = r_{m+1}-r_m$. Applying Proposition \ref{key_prop}, we have
\[
d_1\big(g_{s+r_{m+1}-r_m} \beta_m^{-1}\tilde v_{m+1},\, g_{s} \beta_m^{-1} \tilde v_m\big) \le 12 \ell^{\alpha}_{m+1} < \frac{\varepsilon}{4^{m+1}}
\]
for all $s\ge 0$. Hence,
\[
d_1\big(g_{t+r_{m+1}} v_{m+1},\, g_{t+r_m} v_m\big) < \frac{\varepsilon}{4^{m+1}}
\]
for all $t\ge r_m$, and in particular this holds if $t\ge \varepsilon/ 9$ thanks to Proposition \ref{convergence_winding_time}. Using \ref{Pm}, that we already established for $n\in\{0,\dots,m\}$ and any $l\in \{0,\dots,m+1\}$, if $t\ge T_l$, we have
\begin{align*}
	d_1\big(g_{t+r_{m+1}} v_{m+1},\, g_t u\big) \le & \, d_1\big(g_{t+r_{m+1}} v_{m+1},\, g_{t+r_m} v_m\big) + d_1\big(g_{t+r_m} v_m,\, g_t u\big) \\
	 <& \, \frac{\varepsilon}{4^{m+1}} + \left(\sum_{k=0}^{m} \frac{1}{2^k}\right)\frac{\varepsilon}{2^l} \le \left(\sum_{k=0}^{m+1} \frac{1}{2^k}\right)\frac{\varepsilon}{2^l}.
\end{align*}

\end{proof}

\begin{corollary}\label{cor1}
	Let $\varepsilon>0$ and $(\alpha_n)$ be a subsequence of $(p_n)$ such that
	\[
	\ell_n^\alpha < \frac{1}{12}\frac{\varepsilon}{4^n}.
	\]
	Then $w_\alpha \in W^{ss} u$.
	
	Moreover, for all $t\ge 0$, $d_1\big(g_{t} w_{\alpha},\, g_t u\big)\le 3\varepsilon$.
\end{corollary}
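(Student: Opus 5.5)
The plan is to pass to the limit $n\to+\infty$ in the uniform estimate \eqref{Pm} of the previous proposition. The key input is that $g_{t+r_n}v_n\to g_t w_\alpha$ in $T^1S$ for each fixed $t$.

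\emph{Step 1: continuity.} By Lemma \ref{lemma_vectors}, $\tilde v_n\to\tilde v_\alpha$ in $T^1\H$. By Proposition \ref{convergence_winding_time}, $r_n\to r_\alpha$; its hypothesis holds because $\sum\ell^\alpha_n\le\varepsilon/9<\infty$. The geodesic flow $(t,\tilde v)\mapsto g_t\tilde v$ is jointly continuous, so for each fixed $t$ we get $g_{t+r_n}\tilde v_n\to g_{t+r_\alpha}\tilde v_\alpha=g_t\tilde w_\alpha$. The projection $T^1\H\to T^1S$ is $1$-Lipschitz for $d_1$, so this convergence also holds in $T^1S$. Concretely, it is enough to note that $d_1(g_{t+r_n}\tilde v_n,g_t\tilde w_\alpha)\le 2|r_n-r_\alpha|+d_1(g_t\tilde v_n,g_t\tilde v_\alpha)$, and the last term tends to $0$ because the starting points coincide ($=i$) and the endpoints converge.

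\emph{Step 2: membership in $W^{ss}u$.} Fix $l\in\N$ and $t\ge T_l$. For every $n$, \eqref{Pm} gives
\[
d_1(g_{t+r_n}v_n,g_tu)<\Bigl(\sum_{k=0}^{n}2^{-k}\Bigr)\frac{\varepsilon}{2^l}\le \frac{2\varepsilon}{2^l}.
\]
Letting $n\to\infty$ and using Step 1 together with the triangle inequality yields $d_1(g_tw_\alpha,g_tu)\le 2\varepsilon/2^l$ for all $t\ge T_l$. Since $l$ is arbitrary, $d_1(g_tw_\alpha,g_tu)\to0$ as $t\to+\infty$, that is, $w_\alpha\in W^{ss}u$.

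\emph{Step 3: the uniform bound for all $t\ge0$.} This is where care is needed, since $T_0=\varepsilon/9>0$ and \eqref{Pm} with $l=0$ only covers $t\ge\varepsilon/9$. For $t\ge T_0$, Step 2 with $l=0$ already gives $d_1(g_tw_\alpha,g_tu)\le2\varepsilon$. For $0\le t<\varepsilon/9$, I would write $t=T_0-s$ with $0< s\le\varepsilon/9$ and use the triangle inequality:
\[
d_1(g_tw_\alpha,g_tu)\le d_1(g_tw_\alpha,g_{T_0}w_\alpha)+d_1(g_{T_0}w_\alpha,g_{T_0}u)+d_1(g_{T_0}u,g_tu).
\]
The outer two terms are each at most $2s\le 2\varepsilon/9$, since $d_1(g_a x,g_b x)\le 2|a-b|$. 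Hence the total is at most $2\varepsilon+4\varepsilon/9\le3\varepsilon$.

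I expect the main obstacle to be Step 1, namely justifying that the limit vector $w_\alpha$ really is the pointwise limit of $g_{t+r_n}v_n$, together with the fact that the bounds in \eqref{Pm} are uniform in $n$. The latter is exactly what the previous proposition provides, so passing to the limit is legitimate.
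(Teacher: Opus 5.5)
Your proof is correct and follows the paper's argument: you pass to the limit $n\to\infty$ in the uniform estimate for each $l$ to get $d_1(g_t w_\alpha,g_tu)\le \varepsilon/2^{l-1}$ for $t\ge T_l$, and you handle $0\le t\le T_0=\varepsilon/9$ by the same triangle inequality through $g_{T_0}$. The paper takes your Step 1 for granted. Your displayed inequality there has a small slip: the last term should be $d_1(g_{t+r_\alpha}\tilde v_n,g_{t+r_\alpha}\tilde v_\alpha)$, not the same quantity at time $t$. This changes nothing, since that term still tends to $0$, and your joint-continuity argument already covers it.
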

\begin{proof}
	For a fixed $l\ge 0$, we have, for all $t\ge T_l$,
	\[
	d_1\big(g_{t+r_n} v_n,\, g_t u\big)
	<
	\frac{\varepsilon}{2^{l-1}}.
	\]
	Taking the limit when $n\to +\infty$, we obtain, for all $t\ge T_l$,
	\[
	d_1\big(g_{t} w_{\alpha},\, g_t u\big)
	\le
	\frac{\varepsilon}{2^{l-1}}.
	\]
	This shows that \[\lim_{t\to +\infty} d_1\big(g_{t} w_{\alpha},\, g_t u\big) = 0.\]
	
	Moreover, if $t\ge T_0$, then $d_1\big(g_{t} w_{\alpha},\, g_t u\big)\le 2\varepsilon$. If $0\le t\le T_0=\varepsilon/9$, then
	\begin{align*}
		d_1\big(g_{t} w_{\alpha},\, g_t u\big) \le& \, d_1\big(g_{t} w_{\alpha},\, g_{T_0} w_{\alpha}) + d_1\big(g_{T_0} w_{\alpha},\, g_{T_0} u\big) + d_1\big(g_{T_0} u,\, g_t u\big) \\
		\le&\,  \frac{2\varepsilon}{9} + 2\varepsilon + \frac{2\varepsilon}{9} < 3\varepsilon.
	\end{align*}

\end{proof}

\subsection{Construction of $w_\alpha$ in $W^{ss} u \setminus H^{ss} u$}

Let $\varepsilon>0$. 
We denote by $\Sigma$ the set of subsequences $\alpha = (\alpha_n)_{n\in \N}$ of $(p_n)_{n\in \N}$ such that \[\ell_n^\alpha < \frac{1}{12}\frac{\varepsilon}{4^n}.\]
We define a map $\xi : \Sigma \to \reals$ as follows: for $\alpha = (\alpha_n)_{n\in \N}$, let $\xi(\alpha)$ be the point $\lim_{n\to +\infty} \alpha_n \dots \alpha_0 \infty$ which exists and is a positive number by Lemma \ref{lemma_vectors}. Notice that $\xi(\alpha)$ is nothing else than the point at infinity of $\tilde w_{\alpha}$.

\begin{proposition}
	The image of $\xi$ is uncountable.
\end{proposition}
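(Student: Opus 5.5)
The plan is to exhibit an injection of $\{0,1\}^{\mathbb N}$ into $\Sigma$ on which $\xi$ is injective. The mechanism is a nested-interval argument. Two subsequences that first differ at step $m$ produce limit points in the images, under the same isometry $\beta_{m-1}$, of two disjoint boundary intervals.

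\emph{Step 1: localization of the limit point.} Write $\beta_{-1}=\mathrm{id}$. Fix $\alpha\in\Sigma$ and $m\ge 0$, and set $y_m=\lim_{n\to\infty}\alpha_m\alpha_{m+1}\cdots\alpha_n\infty$. This limit exists by the proof of Lemma \ref{lemma_vectors} applied to the shifted sequence $(\alpha_{m+k})_{k}$. Then $\xi(\alpha)=\beta_{m-1}(y_m)$, by continuity of $\beta_{m-1}$ on $\partial\H$. The three monotonicity facts in the proof of Lemma \ref{lemma_vectors} show that $\alpha_m\cdots\alpha_n\infty$ decreases in $n$ and stays strictly above $x^\alpha_m$. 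Hence $y_m\in[x^\alpha_m,\alpha_m\infty]$.

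\emph{Step 2: computing $p_j\infty$.} For each $j$, the horocycle $\tilde H_j$ is a Euclidean circle tangent to the real axis at $x_j$ and to the imaginary axis at height $x_j$, so its Euclidean radius is $r_j=x_j/2$. Conjugate by $z\mapsto -1/(z-x_j)$:
\begin{itemize}
\item $\tilde H_j$ becomes the horizontal line at height $1/(2r_j)$;
\item $p_j$ becomes a translation by $\pm\ell_j/(2r_j)$, where $\ell_j=\ell(\tilde H_j,p_j)$;
\item the point $\infty$ corresponds to $0$.
\end{itemize}
Translating back gives $p_j\infty=x_j\pm 2r_j/\ell_j$. The sign is $+$ because $p_j\infty>x_j$ (this is the first bullet in the proof of Lemma \ref{lemma_vectors}). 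Therefore
\[
p_j\infty=x_j\bigl(1+1/\ell_j\bigr).
\]
The exact constant does not matter. What matters is that this is a finite quantity depending only on $j$.

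\emph{Step 3: a sparse index set.} By Proposition \ref{cusprecurrent}, $x_n$ increases to $+\infty$ and $\ell_n\to 0$. So we can choose indices $j_0<j_1<j_2<\cdots$ inductively such that:
\begin{itemize}
\item $\ell_{j_k}<\frac{1}{12}\frac{\varepsilon}{4^{k}}$;
\item $x_{j_{k+1}}>p_{j_k}\infty$.
\end{itemize}
The second condition makes the closed intervals $I_k=[x_{j_k},p_{j_k}\infty]$ pairwise disjoint, since they are ordered from left to right. For $s=(s_n)\in\{0,1\}^{\mathbb N}$, define $\alpha^s_n=p_{j_{2n+s_n}}$. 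The indices $j_{2n+s_n}$ are strictly increasing in $n$, so $\alpha^s$ is a subsequence of $(p_n)$. Moreover
\[
\ell^{\alpha^s}_n=\ell_{j_{2n+s_n}}<\tfrac1{12}\varepsilon\,4^{-(2n+s_n)}\le\tfrac1{12}\varepsilon\,4^{-n},
\]
so $\alpha^s\in\Sigma$.

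\emph{Step 4: injectivity.} Let $s\neq s'$ first differ at index $m$. Then $\alpha^s$ and $\alpha^{s'}$ share the same $\beta_{m-1}$, and their $m$-th terms are $p_{j_{2m}}$ and $p_{j_{2m+1}}$. By Step 1, $\xi(\alpha^s)=\beta_{m-1}(y)$ and $\xi(\alpha^{s'})=\beta_{m-1}(y')$, with $y$ and $y'$ lying in different intervals among $I_{2m},I_{2m+1}$. These intervals are disjoint, so $y\neq y'$. Since $\beta_{m-1}$ is a bijection of $\partial\H$, we get $\xi(\alpha^s)\ne\xi(\alpha^{s'})$. Hence $s\mapsto\xi(\alpha^s)$ is injective on the uncountable set $\{0,1\}^{\mathbb N}$, and the image of $\xi$ is uncountable.

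The main obstacle is Step 2 together with the spacing condition in Step 3. One must check that the orientation convention (tangency to the oriented pair) really puts $p_j\infty$ to the right of $x_j$, and that it stays at a finite position controlled by $j$ alone. Without this, the intervals $[x_j,p_j\infty]$ could overlap for every choice of indices. The orientation question is settled by the dynamics already recorded in Lemma \ref{lemma_vectors}. Once the formula for $p_j\infty$ is in hand, the sparse choice of indices is routine.
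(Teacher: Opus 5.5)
Your proof is correct, but it is organized differently from the paper's.

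The paper argues by contradiction. It enumerates a putative countable image $\{\xi_0,\xi_1,\dots\}$ and builds one diagonal sequence $\alpha'$ adaptively, choosing $x_{k_{n+1}}>(\alpha'_n)^{-1}\cdots(\alpha'_0)^{-1}\xi_{n+1}$. It then uses only the one-sided bound $\alpha'_n\alpha'_{n+1}\cdots\infty>x^{\alpha'}_n$ to separate $\xi(\alpha')$ from every $\xi_n$.

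You instead fix, once and for all, a sparse set of indices whose intervals $I_k=[x_{j_k},p_{j_k}\infty]$ are pairwise disjoint. You then use the two-sided localization $y_m\in[x^\alpha_m,\alpha_m\infty]$ to obtain an explicit injection $\{0,1\}^{\mathbb N}\to\Sigma$ on which $\xi$ is injective. This costs you the extra upper bound, which the paper never needs. In return you get a parametrized family of continuum cardinality directly, which plugs neatly into Corollary \ref{cor2} and Theorem \ref{thm1}. The paper's argument is shorter and needs less information about where $\xi(\alpha)$ lies.

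There is one slip in Step 2. The circle $\tilde H_j$ is tangent to the real axis at $x_j$ and to the imaginary axis at $x_j i$. That tangency point sits at the height of the center, so the Euclidean radius is $x_j$, not $x_j/2$. Your conjugation then gives $p_j\infty=x_j(1+2/\ell_j)$.

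The slip is harmless, and Step 2 is unnecessary. $p_j\infty$ is finite simply because $p_j$ is parabolic with unique fixed point $x_j\neq\infty$. The inequality $p_j\infty>x_j$ is the first bullet in the proof of Lemma \ref{lemma_vectors}. So the ``main obstacle'' you flag is not really one: the spacing condition $x_{j_{k+1}}>p_{j_k}\infty$ can always be met because $x_n\to+\infty$.
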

\begin{proof}
	The proof is by contradiction. Assume that the image of $\xi$ is countable. We can enumerate its elements as $\{\xi_0 , \xi_1, \dots \}$, with $\xi_k$ being the image by $\xi$ of a sequence $\alpha^k = (\alpha^k_n )_{n\in \N}$ in $\Sigma$. Our goal is to find a sequence $\alpha' \in \Sigma$ such that $\xi(\alpha')$ is different from all the $\xi_n$.
	
	Denote $\ell_n= \ell (\tilde H_n,p_n)$. Recall that the fixed points $x_n$ of the parabolic isometries $p_n$ form an increasing sequence of real numbers tending to infinity and that the lengths $\ell_n$ tend to $0$. The choice of $\alpha_n'$ is done inductively as follows:
	
	First, we choose $p_{k_0}$ such that the fixed point $x_{k_0}$ is greater than $\xi_0$ and $ \ell_{k_0} < \frac{\varepsilon}{12}$. We let $\alpha_0'= p_{k_0}$.
	
	Now, assume that we have chosen $\alpha_0'= p_{k_0},\dots , \alpha_n'= p_{k_n}$ that satisfy $\ell_{k_n} < \varepsilon/12 \cdot 4^{-n}$. Now there exists ${k_{n+1}}> k_n$ such that $\ell_{k_{n+1}} < \varepsilon/12 \cdot 4^{-(n+1)}$ and \[
	x_{k_{n+1}} > (\alpha_n')^{-1} \dots (\alpha_0')^{-1} \xi_{n+1}.
	\]
	Let $\alpha_{n+1}' = p_{k_{n+1}}$.
	
	Thus we define a sequence $\alpha' = (\alpha'_n)$ that belongs to $\Sigma$.	
	Let us prove that, for any $n\ge 0$, $\xi(\alpha')$ does not coincide with $\xi_n$. 
	
		The relevant property is that, for any subsequence $\alpha$ of $(p_n)_n$, $\xi(\alpha)= \alpha_0 \alpha_1 \dots \infty$ is greater than the fixed point $x_0^{\alpha}$ of the isometry $\alpha_0$. This follows easily from the properties of Lemma \ref{lemma_vectors}. 
		
	For instance, $\xi (\alpha') > x_0^{\alpha'} = x_{k_0} >\xi_0$ by the choice of $k_0$.
	For $n\ge 1$, we have
	\[
	(\alpha_{n-1}')^{-1} \dots (\alpha_0')^{-1} \xi (\alpha ') = \alpha_n' \alpha_{n+1}' \dots \infty > x_n^{\alpha'} = x_{k_n} > (\alpha_{n-1}')^{-1} \dots (\alpha_0')^{-1} \xi_{n}
	\]
	by the choice of $k_n$. This implies that $\xi (\alpha' ) $ does not coincide with $\xi_n$.
	
\end{proof}

\begin{corollary}\label{cor2}
	There exist a subsequence $(\alpha_n)_{n\in \N}$ in $\Sigma$ such that $w_\alpha \notin H^{ws} u$. Moreover, $W^{ss}u$ is an uncountable union of stable horocycles.
\end{corollary}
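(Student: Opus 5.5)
The plan is to combine the uncountability of the image of $\xi$ with the fact that, in $T^1S$, a single weak stable set $H^{ws}v$ accounts for only countably many endpoints in $\partial\H$.

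\textbf{Step 1: reduction to endpoints.} Suppose $w_\alpha\in H^{ws}u$. Then some lift of $w_\alpha$ lies in $H^{ws}(\gamma\tilde u)$ for some $\gamma\in\Gamma$. Since $\tilde w_\alpha$ is one lift of $w_\alpha$, this means $\tilde w_\alpha(+\infty)=\gamma\,\tilde u(+\infty)=\gamma\infty$ for some $\gamma\in\Gamma$. In other words, $\xi(\alpha)$ would lie in the orbit $\Gamma\infty$, and this orbit is countable because $\Gamma$ is countable. By the preceding Proposition, however, $\xi(\Sigma)$ is uncountable. Hence some $\alpha\in\Sigma$ satisfies $\xi(\alpha)\notin\Gamma\infty$, and for that $\alpha$ we get $w_\alpha\notin H^{ws}u$. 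This proves the first claim. By Corollary \ref{cor1}, every $w_\alpha$ with $\alpha\in\Sigma$ lies in $W^{ss}u$.

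\textbf{Step 2: $W^{ss}u$ is saturated by stable horocycles.} Let $v\in W^{ss}u$ and $v'\in H^{ss}v$. Choose lifts with $\tilde v'\in H^{ss}\tilde v$. By Lemma \ref{distance_horo}, $d(\tilde v(t),\tilde v'(t))\to0$. The same holds at time $t+1$, so $d_1(g_tv,g_tv')\to0$. The triangle inequality then gives $v'\in W^{ss}u$. Therefore $W^{ss}u$ is a union of stable horocycles, and in particular it contains $H^{ss}w_\alpha$ for every $\alpha\in\Sigma$.

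\textbf{Step 3: uncountably many distinct horocycles.} The horocycles $H^{ss}w_\alpha$ and $H^{ss}w_{\alpha'}$ coincide in $T^1S$ only if $\xi(\alpha')\in\Gamma\,\xi(\alpha)$. Indeed, points of a common stable horocycle have lifts with equal forward endpoints, up to the action of $\Gamma$. Each $\Gamma$-orbit in $\partial\H$ is countable, while $\xi(\Sigma)$ is uncountable. A countable union of $\Gamma$-orbits is countable, so $\xi(\Sigma)$ meets uncountably many distinct orbits. Choosing one $\alpha$ for each such orbit produces uncountably many pairwise distinct stable horocycles $H^{ss}w_\alpha$, all contained in $W^{ss}u$. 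Together with Step 2, this shows that $W^{ss}u$ is an uncountable union of stable horocycles.

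\textbf{Main point of care.} The argument needs a precise translation from "same weak stable set in $T^1S$" to "endpoints in the same $\Gamma$-orbit". This is immediate from the definition of $H^{ws}$ and $H^{ss}$ as projections of $\Gamma$-equivariant sets, so no serious obstacle is expected.
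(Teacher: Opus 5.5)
Your proof is correct and follows essentially the same route as the paper. In both, lifts of vectors lying in a common (weak) stable horocycle have forward endpoints in a single countable $\Gamma$-orbit, so the uncountability of $\xi(\Sigma)$ yields uncountably many distinct orbits and hence uncountably many disjoint $H^{ss}w_\alpha$. Your Step 2 uses Lemma \ref{distance_horo} to check that $W^{ss}u$ is saturated by stable horocycles, so that $H^{ss}w_\alpha\subset W^{ss}u$; the paper uses this without stating it, and you make it explicit.
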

\begin{proof}
	If $w\in T^1S$ belongs to $H^{ss}u$, then $\tilde w (+\infty)$ belongs to $\Gamma \tilde u(+\infty)$ for any lifts $\tilde w, \tilde u$. Since the image of $\xi$ is uncountable, it intersects uncountably many orbits of $\Gamma$ in $\partial \H$. Hence, there are uncountably many disjoint $H^{ss} w_\alpha$ in $W^{ss} u$ with $\alpha\in \Sigma$.
\end{proof}

\section{Abundance of cusp-recurrent vectors}

\begin{proposition}\label{denseimpliescr}
	Let $S$ be a nonelementary hyperbolic surface with at least one cusp. Then every vector $u\in T^1 S$ whose forward orbit is dense in the nonwandering $\mathrm{NW} (g_t)$ set of $g_t$ is cusp-recurrent. 
\end{proposition}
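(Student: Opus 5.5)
Fix a vector $u$ whose forward $g_t$-orbit is dense in $\mathrm{NW}(g_t)$. Cusp-recurrence requires two things: the ray $u(\reals^+)$ does not diverge through a cusp, and it meets closed horocycles of arbitrarily small length. The plan is to show that $\mathrm{NW}(g_t)$ contains vectors based arbitrarily deep inside a cusp, together with vectors based in a fixed compact part of $S$. Density of the forward orbit then forces the ray to enter every depth of the cusp and also to come back out infinitely often.

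\textbf{Step 1: deep vectors in the nonwandering set.} Lift to $\H$ and fix a cusp whose parabolic fixed point we normalize to $\infty$, with stabilizer generated by $p(z)=z+\lambda$. Since $S$ is nonelementary, the limit set $\Lambda_\Gamma$ is a perfect set containing $\infty$, and it is invariant under $p$. Choose a hyperbolic element $h\in\Gamma$ whose axis has both endpoints $a,b$ finite, and set $h_k=p^k h p^{-k}$. The axis of $h_k$ joins $a+k\lambda$ and $b+k\lambda$, so it is a Euclidean semicircle of fixed radius $|a-b|/2$. It therefore reaches height $|a-b|/2$, which corresponds to a fixed horocycle $\tilde H$ centered at $\infty$.

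This by itself is not deep enough. To go deeper, I would use the loxodromic elements $\gamma_k = p^k h$. For $k$ large their attracting and repelling fixed points are at horizontal distance comparable to $k\lambda$. The projected closed geodesics then reach height of order $k\lambda$, so they enter the cusp region bounded by closed horocycles of length about $\lambda/(k\lambda)\to 0$. Tangent vectors to closed geodesics are periodic, hence lie in $\mathrm{NW}(g_t)$. So $\mathrm{NW}(g_t)$ contains vectors based on closed horocycles of arbitrarily small length. Proving this fixed-point estimate for $p^k h$ is the step I expect to be the main technical point. I would do it directly from the dynamics: $p^k h$ maps the complement of a small neighborhood of $h^{-1}(\infty)$ near $\infty$ into a neighborhood of $\infty$ translated by $k\lambda$. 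This gives an attracting fixed point near $\infty$ in the spherical metric, and a repelling fixed point near $h^{-1}(\infty)$, which is finite. A geodesic from a bounded real point to a point of size about $k\lambda$ reaches height about $k\lambda/2$.

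\textbf{Step 2: conclude cusp-recurrence.} For each $n$, Step 1 gives a periodic vector $w_n$ based inside the horoball whose boundary projects to a closed horocycle of length $1/n$. By density, $g_{t}u$ comes arbitrarily close to $w_n$ for arbitrarily large $t$. In particular $u(t)$ lies inside the cusp region bounded by a closed horocycle of length less than $2/n$. The ray $u(\reals^+)$ enters that cusp region, and since the region is foliated by closed horocycles, it crosses the closed horocycles of all intermediate lengths. This gives the second condition.

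For the first condition, pick any periodic vector $w$ based in the compact core, for instance one tangent to the axis of $h$. Density makes $g_t u$ return arbitrarily close to $w$ at arbitrarily large times. Hence the basepoints $u(t)$ return infinitely often to a fixed compact set, so the ray does not diverge through any cusp. This completes the plan.
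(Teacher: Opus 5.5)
Your proof is correct, but it produces the deep nonwandering vectors differently from the paper. The paper takes a single nonwandering vector $v$ whose forward ray lies in the cusp $C$ (a lift points at the parabolic fixed point). The vectors $g_n v$ are then nonwandering and based on closed horocycles of length tending to $0$, and density gives times $t_n$ with $d(u(t_n),v(n))\le 1$; non-divergence is simply asserted. The existence of such a $v$ implicitly uses that $\mathrm{NW}(g_t)$ consists of vectors with both endpoints in $\Lambda_\Gamma$ and that parabolic points are limit points. Your periodic vectors along the closed geodesics of $p^kh$ are nonwandering for free. The price is the fixed-point estimate, which is a one-line computation rather than a technical obstacle. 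Write $h(z)=(az+b)/(cz+d)$ with $c\neq 0$, since a hyperbolic element of a discrete group cannot fix the parabolic point $\infty$. Then $p^kh$ has trace $a+d+k\lambda c$, so it is hyperbolic for large $k$, with fixed points $k\lambda+a/c+o(1)$ and $-d/c+o(1)$. Its axis therefore reaches height $k\lambda/2+O(1)$, i.e.\ closed horocycles of length about $2/k$. Your treatment of non-divergence is more explicit than the paper's, but ``returns at arbitrarily large times'' needs one line of justification. If $u(\mathbb{R}^+)$ diverged, the half-orbit $g_{\mathbb{R}^+}u$ would be a closed subset of $T^1S$, so density would force $\mathrm{NW}(g_t)\subset g_{\mathbb{R}^+}u$. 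This is impossible, since the periodic orbit of $h$ lies in $\mathrm{NW}(g_t)$ and cannot lie on a divergent ray. For the horocycle condition you do not need large times at all: a single $t\ge 0$ with $u(t)$ deep in the cusp already places $u(t)$ on a short closed horocycle.
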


\begin{proof}
	Let $u\in T^1 S$ be such that $g_{\reals^+} u$ is dense in the nonwandering set of $g_t$. In particular, $u(\reals^+)$ is not divergent.
	
	Let $C$ be a cusp of $S$ and let $v\in T^1 S$ a nonwandering vector such that the geodesic ray $v(\reals^+)$ is included in $C$. Clearly, $g_t v$ is also nonwandering for all $t$.  Since $g_{\reals^+} u$ is dense in the nonwandering set, there exists an increasing sequence of times $t_n\to +\infty$ such that $g_{t_n} u$ is at distance less than $1$ from $g_n v $. 
	
	Observe that $v(n)$ belongs to a closed horocycle around the cusp $C$ whose length converges to $0$.  Since $d(v(n), u(t_n))\le1$ for all $n\in \N$, then the length of the closed horocycle around $C$ passing through $u(t_n)$ also tends to $0$. This shows that $u$ is cusp-recurrent.
	
\end{proof}

It is well known that the geodesic flow $g_t$ on a nonelementary hyperbolic surface $S$ is topologically transitive on its nonwandering set $\mathrm{NW} (g_t)$ i.e. it has a dense orbit in $\mathrm{NW} (g_t)$. This implies that there exists a $G_\delta$ dense subset in $\mathrm{NW} (g_t)$ of vectors whose forward $g_t$-orbit is dense in $\mathrm{NW} (g_t)$, and hence, are cusp-recurrent. 

%

\begin{proposition}\label{finitevol}
	Let $S$ be a noncompact finite volume hyperbolic surface. Then almost every vector in $T^1 S$ is cusp-recurrent.
\end{proposition}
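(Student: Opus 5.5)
The plan is to deduce the statement from Proposition \ref{denseimpliescr}, via the ergodicity of the Liouville measure under the geodesic flow. Let $S$ be a noncompact surface of finite volume. Then $S$ is nonelementary and has at least one cusp. Since $S$ has finite volume, the nonwandering set $\mathrm{NW}(g_t)$ is all of $T^1S$: the Liouville measure $m$ is finite and $g_t$-invariant, so Poincaré recurrence shows that $m$-almost every vector is recurrent. By Proposition \ref{denseimpliescr}, it therefore suffices to prove that for $m$-almost every $u\in T^1S$ the forward orbit $g_{\reals^+}u$ is dense in $T^1S$.

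For this we use the classical fact (Hopf) that $g_t$ is ergodic with respect to the finite measure $m$, whose support is all of $T^1S$. Choose a countable basis $(U_k)_{k\in\N}$ of nonempty open sets of $T^1S$; each satisfies $m(U_k)>0$. By Birkhoff's ergodic theorem applied to the indicator function of $U_k$, for almost every $u$ we have
\[
\lim_{T\to+\infty}\frac1T\int_0^T \mathbbm{1}_{U_k}(g_tu)\,dt = \frac{m(U_k)}{m(T^1S)}>0 .
\]
In particular the forward orbit of such a $u$ enters $U_k$. Intersecting these full-measure sets over the countably many $k$ gives a full-measure set of vectors whose forward orbit meets every $U_k$, that is, whose forward orbit is dense. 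Proposition \ref{denseimpliescr} then shows that all these vectors are cusp-recurrent.

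There are two points to check. The first is that we only need ergodicity as a known result; we cite it rather than reprove it. The second is that the density must hold for the forward orbit $g_{\reals^+}u$ rather than the full orbit. This is handled because Birkhoff averages are taken over positive times.

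If one wants to avoid Proposition \ref{denseimpliescr}, there is a direct alternative. Take a cusp neighbourhood $C_\epsilon$ bounded by a closed horocycle of length $\epsilon$; it has positive measure. Birkhoff's theorem applied to $\mathbbm{1}_{C_{1/j}}$ for every $j$ gives almost sure infinitely many visits to each $C_{1/j}$, which yields intersections with arbitrarily short closed horocycles. Birkhoff's theorem applied to $\mathbbm{1}_K$ for a compact set $K$ of positive measure shows that the ray does not diverge through a cusp.
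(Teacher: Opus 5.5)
Your proposal is correct and follows the paper's proof: Hopf's ergodicity plus Birkhoff's theorem give that almost every forward orbit is dense in $T^1S$, and Proposition \ref{denseimpliescr} then yields cusp-recurrence. You also spell out details the paper leaves implicit (the countable-basis argument and the fact that $\mathrm{NW}(g_t)=T^1S$ in finite volume), and your alternative direct argument via Birkhoff averages over shrinking cusp neighbourhoods and a compact set is also valid.
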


\begin{proof}
	The surface $S$ must have at least a cusp. Since the volume measure is ergodic under the geodesic flow by a classical result of Hopf \cite{Hopf1}, the Birkhoff average of almost every vector converges to the volume measure. In particular, the forward $g_t$-orbit of almost every vector is dense in $T^1S$, hence cusp-recurrent by Proposition \ref{denseimpliescr}.
\end{proof}

Proposition \ref{finitevol} together with Theorem \ref{thm2} imply Theorem \ref{thm3}.

\section{Non-expansiveness and proof of Theorem \ref{thm1}}


We give a definition of expansive flow which allows the constant of expansiveness $\delta$ to depend on the point. Remark that this notion of expansiveness is weaker than the notions that appear in the literature \cite{KatokHasselblatt1995,BowenWalters1972}.

\begin{definition}
	A continuous flow $f_t$ on a metric space $(X,d)$ is \emph{expansive} if for every $x\in X$ there exists a constant $\delta>0$ such that for every $y\in X$ and every continuous function $\phi: \reals \to \reals$ with $\phi(0) =0$,
	\[
	d(f_t(x),f_{\phi(t)} (y)) <\delta, \quad \forall t \in \reals
	\]
	implies that $y=f_{\tau} (x)$ for some $\tau\in \reals$.
\end{definition}

\begin{definition}
	Let $u,w\in T^1 \H$ such that $w(+\infty)\not = u(-\infty)$. We define the local product $[w,u]$ as the unique vector $z$ in $H^{ss}(w)$ such that $z(-\infty)= u(-\infty)$. 
\end{definition}

In the following, we will use the continuity of the product structure.

\begin{lemma}\label{cont_localprod} \cite{KatokHasselblatt1995}
	The local product $(w,u)\mapsto [w,u]$ is uniformly continuous in the distance $d_1$. 
\end{lemma}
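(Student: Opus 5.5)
The plan is to use the isometry invariance of every object involved, which reduces the uniform statement to ordinary continuity on a compact set. One caveat about the formulation comes first. As $w(+\infty)$ approaches $u(-\infty)$, the vector $[w,u]$ escapes to infinity in $T^1\H$. So the statement cannot mean uniform continuity on the whole domain $\{w(+\infty)\neq u(-\infty)\}$. I will prove it in the local form in which it is used: there is $c>0$ such that $(w,u)\mapsto[w,u]$ is uniformly continuous on
\[
\mathcal D_c=\{(w,u): d_1(w,u)\le c\}.
\]
If $c$ is small, then $d_1(w,u)\le c$ forces $w(+\infty)\ne u(-\infty)$, so $\mathcal D_c$ lies in the domain of the local product.

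\textbf{Step 1 (equivariance).} For every isometry $g$ of $\H$ we have $d_1(gw,gu)=d_1(w,u)$, because $d_1$ is built from the distance $d$ and $g$ commutes with the geodesic flow. The local product also satisfies $[gw,gu]=g[w,u]$, since $g$ maps stable horocycles to stable horocycles and preserves endpoints at infinity. Fix a reference vector $u_0$ with $u_0(0)=i$ and $u_0(+\infty)=\infty$, so that $u_0(-\infty)=0$.

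\textbf{Step 2 (continuity on a compact set).} Let
\[
K=\{(w,u): d_1(u,u_0)\le 1,\ d_1(w,u)\le c\}.
\]
This set is compact. For $c$ small, every $(w,u)\in K$ has $w(+\infty)$ and $u(-\infty)$ in disjoint compact neighbourhoods of $\infty$ and $0$ in $\partial\H$. On $K$ the map $[w,u]$ can be written explicitly in the upper half-plane model. Set $\xi=w(+\infty)$ and $\eta=u(-\infty)$. Then $[w,u]$ is the unit vector based at the intersection of the horocycle through $\pi(w)$ centred at $\xi$ with the geodesic $(\eta,\xi)$, pointing towards $\xi$. The endpoints $\xi$ and $\eta$ depend continuously on the vectors. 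The horocycle and the geodesic meet transversally in a single point, and that point depends continuously on the data as long as $\xi\neq\eta$. Hence $[\cdot,\cdot]$ is continuous on $K$, and therefore uniformly continuous on $K$ by compactness.

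\textbf{Step 3 (transfer to $\mathcal D_c$).} Let $\varepsilon>0$, and let $\delta\le 1/2$ be given by the uniform continuity on $K$. Take $(w,u),(w',u')\in\mathcal D_c$ with $d_1(w,w')+d_1(u,u')<\delta$. Choose an isometry $g$ with $gu=u_0$, which exists because $\mathrm{Isom}(\H)$ acts transitively on $T^1\H$. Then $(gw,gu)$ and $(gw',gu')$ both lie in $K$ and are $\delta$-close, so $d_1(g[w,u],g[w',u'])<\varepsilon$. By Step 1 this gives $d_1([w,u],[w',u'])<\varepsilon$.

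\textbf{Step 4 (quotient).} The same statement descends to $T^1S$ by working with lifts realising the infimum in the definition of $d_1$ on $T^1S$. This is consistent with how the lemma is used in Section 6.

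I expect the main obstacle to be the precise formulation rather than any single estimate: choosing the domain $\mathcal D_c$ so that the endpoints stay uniformly apart. Once that choice is fixed, Steps 1 to 3 are routine.
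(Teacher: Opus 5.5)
Your proof is correct. It takes a different route from the paper, which gives no argument and simply cites Katok--Hasselblatt.

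\textbf{What the citation leaves implicit.} The local product structure in Katok--Hasselblatt is developed for compact hyperbolic sets, and the uniformity comes from compactness of the phase space. Since $T^1\H$ is not compact, applying it here tacitly needs an extra uniformity input.

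\textbf{What your argument supplies.} You provide that input through homogeneity. $PSL_2(\mathbb{R})$ acts transitively on $T^1\H$ by $d_1$-isometries satisfying $[gw,gu]=g[w,u]$. So uniform continuity on $\mathcal D_c$ reduces to plain continuity on the compact set $K$. You get that continuity from the explicit description of $[w,u]$ as the point where the horocycle centred at $w(+\infty)$ through $\pi(w)$ meets the geodesic from $u(-\infty)$ to $w(+\infty)$. The citation buys brevity and places the lemma in the general hyperbolic framework. Your argument buys a self-contained proof and, more importantly, the correct formulation of the statement.

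\textbf{Your caveat is right.} On the whole domain $\{w(+\infty)\neq u(-\infty)\}$ the map is not uniformly continuous. Take $u(-\infty)=0$ and $w_\xi$ based at $i$ pointing at $\xi\to 0^+$. Then $w_\xi$ and $w_{\sqrt{\xi}}$ are $d_1$-close, while $[w_\xi,u]$ and $[w_{\sqrt{\xi}},u]$ are based at heights roughly $\xi^2$ and $\xi$, so their distance tends to infinity. Your local version is precisely what Proposition~\ref{distancelp} uses, since there one compares $(w,w)$ or $(u,u)$ with $(w,u)$, and all of these lie in $\mathcal D_c$.

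\textbf{Two minor points.}
\begin{enumerate}
\item In Step~2 you only need that $w(+\infty)$ and $u(-\infty)$ stay uniformly apart on $K$. This follows from compactness and continuity of the endpoint maps, without committing to specific neighbourhoods of $\infty$ and $0$.
\item Step~4 is unnecessary, since the paper only applies the lemma in $T^1\H$ and then projects the resulting estimates. As written it would also need care: near a cusp the lifts realising the infimum in $d_1$ need not be unique, so the descended map is not obviously well defined.
\end{enumerate}
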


\begin{proposition}\label{distancelp}
	For every $\delta>0$ there exists $\varepsilon>0$ such that for every pair $u,w \in T^1 \H$ such that $d_1(u,w) <\varepsilon $ then 
	\begin{align*}
		d_1(g_t w , g_t [w,u] ) <\delta, \quad \forall t\ge 0 ,\\
		d_1(g_t u , g_t [w,u] ) <\delta, \quad \forall t\le 0 . 
	\end{align*}
\end{proposition}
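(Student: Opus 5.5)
Set $z=[w,u]$. The two inequalities are treated separately. In each, the time-zero distance is small by the continuity of the local product, and the monotonicity facts from the preliminaries then propagate this smallness to all $t\ge 0$ or all $t\le 0$. Uniformity holds because every bound depends only on $d_1$ at time $0$, and $d_1$ and the geodesic flow are invariant under isometries.

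\emph{Step 1 (closeness at time zero).} Since $[u,u]=u$, Lemma \ref{cont_localprod} gives, for any $\eta>0$, an $\varepsilon>0$ such that $d_1(u,w)<\varepsilon$ implies $d_1(z,u)=d_1([w,u],[u,u])<\eta$. Hence $d_1(z,w)<\eta+\varepsilon$. Using $\varepsilon\le \eta$ and the triangle inequality, both $d(z(s),w(s))$ and $d(z(s),u(s))$ are at most $2\eta$ for $s=0,1$. For the backward estimate we should also have $d(z(0),u(0))$ small, and this is already contained in $d_1(z,u)<\eta$.

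\emph{Step 2 (forward inequality).} Because $z\in H^{ss}w$, Lemma \ref{distance_horo} gives, for $t\ge 0$,
\[
\sinh\bigl(d(z(t),w(t))/2\bigr)=\sinh\bigl(d(z(0),w(0))/2\bigr)e^{-t}\le\sinh(\eta),
\]
and so $d(z(t),w(t))\le 2\eta$ for every $t\ge0$. Applying the same bound at time $t+1$ yields $d_1(g_tz,g_tw)\le 4\eta$ for $t\ge 0$.

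\emph{Step 3 (backward inequality).} Since $z(-\infty)=u(-\infty)$, the flipped vectors $-z$ and $-u$ lie in the same weak stable set $H^{ws}(-u)$. By Lemma \ref{decreasing}, the function $t\mapsto d(z(-t),u(-t))$ is non-increasing in $t$, so $d(z(s),u(s))$ is non-decreasing in $s$. For $t\le 0$ this gives $d(z(t),u(t))\le d(z(0),u(0))<\eta$. For $t+1$ there are two cases. If $t\le -1$, then $t+1\le0$ and the same bound applies. If $-1<t\le 0$, then $t+1\le 1$, and monotonicity gives $d(z(t+1),u(t+1))\le d(z(1),u(1))<\eta$. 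In all cases $d_1(g_tz,g_tu)<2\eta$ for $t\le 0$.

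Choosing $\eta=\delta/4$ then completes the argument. The only delicate point is Step 1, which requires the uniform continuity in Lemma \ref{cont_localprod} to hold on a uniform neighbourhood of the diagonal, where $w(+\infty)\neq u(-\infty)$ is automatic when $d_1(u,w)$ is small. One should also check that Lemma \ref{decreasing} is applied with the time reversal handled correctly.
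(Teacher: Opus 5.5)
Your proof is correct and follows the same route as the paper: you get smallness at time zero from the uniform continuity of the local product, and then propagate it to $t\ge 0$ and $t\le 0$ using monotonicity of distances between asymptotic geodesics. The differences are cosmetic. The paper applies Lemma \ref{decreasing} in both directions, which gives $d_1(g_tw,g_t[w,u])\le d_1(w,[w,u])$ for $t\ge 0$ directly, and the analogous bound for $t\le 0$ needs no case split. Your use of Lemma \ref{distance_horo} loses a harmless factor of $2$, and your Step 2 bounds are actually strict because $d(z(0),w(0))<2\eta$, so you do obtain $<\delta$.
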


\begin{proof}
	Let $\delta>0$. 
	By Lemma \ref{cont_localprod}, there exists $\varepsilon_1>0$ such that if $d_1(w,u)<\varepsilon_1$ then $d_1(w, [w,u]) = d_1([w,w], [w,u])<\delta$. A similar argument shows that there exists $\varepsilon_2>0$ such that if $d_1(w,u)<\varepsilon_2$ then $d_1(u, [w,u]) <\delta$. Take $\varepsilon=\min\{\varepsilon_1,\varepsilon_2\}$.
	
	Finally, observe that $w$ and $[w,u]$ point to the same point at infinity. An application of Lemma \ref{decreasing}, yields 
	\[
	d_1(g_t w , g_t [w,u] ) \le d_1( w , [w,u] ), \quad \forall t\ge 0.
	\]
	Similarly, we obtain
	\[
	d_1(g_t u , g_t [w,u] ) \le d_1( u , [w,u] ), \quad  \forall t\le 0.
	\]
	These imply the statement.
\end{proof}

%

We first show that, by winding a geodesic along a single short closed horocycle, one can produce a pair of geodesics that remain close for all time, contradicting the expansiveness of the geodesic flow.
\begin{proposition}\label{nonexpansiveness}
	Let $S$ be a nonelementary hyperbolic surface with at least one cusp. Then the geodesic flow $g_t$ on $T^1 S$ is not expansive.
\end{proposition}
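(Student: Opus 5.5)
The idea is to fix a single cusp-recurrent vector and, for each $\delta$, wind its geodesic once around a sufficiently short closed horocycle. The resulting geodesic shares the backward endpoint of $\tilde u$ and is forward asymptotic to a $\Gamma$-translate of $\tilde u$, so in $T^1S$ it stays $\delta$-close to the orbit of $u$ for all times without lying on it.

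\emph{Step 1: choice of $x$.} Since $S$ is nonelementary, $g_t$ has a forward orbit dense in $\mathrm{NW}(g_t)$, and such vectors form a dense $G_\delta$. By Proposition \ref{denseimpliescr} these vectors are cusp-recurrent. The set of fixed points in $\partial\H$ of nontrivial elements of $\Gamma$ is countable. So I can choose a cusp-recurrent $u$ with a lift $\tilde u$ such that $\tilde u(-\infty)$ is fixed by no nontrivial element of $\Gamma$. This $u$ plays the role of $x$ in the definition of expansiveness, and I must show that no $\delta>0$ works for it.

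\emph{Step 2: construction of $y$ for a given $\delta$.} Proposition \ref{cusprecurrent} gives pairs $(\tilde H_n,p_n)$ tangent to $\tilde u$ at times $t_n\to+\infty$ with $\ell_n:=\ell(\tilde H_n,p_n)\to 0$. Let $\varepsilon$ be given by Proposition \ref{distancelp} for the constant $\delta/2$, and fix $n$ with $12\ell_n<\min(\varepsilon,\delta/2)$. Set $\tilde u'=g_{t_n-1}\tilde u$; it is still tangent to $(\tilde H_n,p_n)$, at time $1$. Write $\tilde v=\mathrm{Wind}_{(\tilde H_n,p_n)}(\tilde u')$ and $\tau=\tau_{\tilde H_n,p_n,\tilde u'}$. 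I then form the local product
\[
\tilde z=[g_\tau\tilde v,\tilde u'].
\]
Its endpoints are $\tilde z(-\infty)=\tilde u(-\infty)$ and $\tilde z(+\infty)=p_n\tilde u(+\infty)$. The local product is defined because $p_n\tilde u(+\infty)\neq\tilde u(-\infty)$, both being distinct from $x_n$ and on the same side of it.

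\emph{Step 3: closeness for all times.} For $t\ge0$, the Key Proposition \ref{key_prop} bounds $d_1(g_{t+\tau}\tilde v,\,g_t\tilde u')$ or $d_1(g_{t+\tau}\tilde v,\,p_ng_t\tilde u')$ by $12\ell_n$. Since both $g_t\tilde u'$ and $p_ng_t\tilde u'$ project to $g_tu'$, this gives $d_1(g_{t+\tau}v,g_tu')<\delta/2$ in $T^1S$. At $t=0$ the bound is sharper: $d_1(g_\tau\tilde v,\tilde u')<\varepsilon$ holds in $T^1\H$ without passing to the quotient (Case (a) of that proof, which is why I shifted so that tangency occurs at time $1$).

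Proposition \ref{distancelp} applied to $w=g_\tau\tilde v$ and $u=\tilde u'$ then gives two estimates. First, $d_1(g_tg_\tau\tilde v,g_t\tilde z)<\delta/2$ for $t\ge0$; combined with the bound above this yields $d_1(g_tz,g_tu')<\delta$ for $t\ge0$. Second, $d_1(g_t\tilde u',g_t\tilde z)<\delta/2$ for $t\le0$. Hence $d_1(g_tu',g_tz)<\delta$ for all $t\in\reals$.

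Taking $y=g_{-(t_n-1)}z$ and $\phi(t)=t$ gives $d_1(g_tu,g_{\phi(t)}y)<\delta$ for all $t\in\reals$. Since $d_1$ is equivalent to $d_{Sa}$, it suffices to do all of this with $\delta$ replaced by a suitably smaller constant.

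\emph{Step 4: $y$ is not on the orbit of $u$.} Suppose $y=g_s u$ for some $s$. Then there is $\gamma\in\Gamma$ mapping $\tilde u(\pm\infty)$ to $\tilde z(\pm\infty)$. In particular $\gamma$ fixes $\tilde u(-\infty)$, so $\gamma=\mathrm{id}$ by the choice in Step 1. This forces $p_n\tilde u(+\infty)=\tilde u(+\infty)$, which is impossible because the only fixed point of $p_n$ is $x_n\neq\tilde u(+\infty)$.

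The main obstacle is Step 3, specifically making sure that the forward estimate from the Key Proposition and the backward estimate from the local product hold for the same vector $\tilde z$ with a common reparametrization. The bookkeeping of the winding time $\tau$ and of the lift ($\tilde u'$ versus $p_n\tilde u'$) is what makes this work, together with getting $\varepsilon$-closeness at $t=0$ in $T^1\H$ rather than only in the quotient. If that step fails, I would instead compute directly in the upper half-plane. Take $\tilde H_n$ to be the line $\mathrm{Im}=h$ and $p_n(z)=z+\lambda$. Then $\tilde u'$ and $\tilde z$ are semicircles with endpoints $-h,h$ and $-h,h+\lambda$ respectively, and the distance between corresponding points is $O(\lambda/h)=O(\ell_n)$, uniformly along the geodesics.
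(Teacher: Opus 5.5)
Your proof is correct and is essentially the paper's argument: wind once around a short horocycle, take the local product of $g_\tau\tilde v$ with the original vector, combine the Key Proposition with Proposition \ref{distancelp}, and use an endpoint argument for distinct orbits. The differences are minor: you put the trivial-stabilizer condition on $\tilde u(-\infty)$, whereas the paper uses that the cusp-recurrent endpoint $\tilde u(+\infty)$ has trivial stabilizer and that $u(\reals^-)$ does not diverge through a cusp; and your shift to tangency at time $1$ makes explicit the estimate $d_1(g_\tau\tilde v,\tilde u')<\varepsilon$ in $T^1\H$, which Proposition \ref{distancelp} needs and the paper uses without stating. In Step 1, existence of $u$ is best justified by noting that cusp-recurrence depends only on $\tilde u(+\infty)$, so you can move the backward endpoint off the countable set of fixed points; the dense-$G_\delta$ route as you wrote it would also need a Baire-category argument.
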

	\begin{proof}
		Take $u\in T^1S$ a cusp-recurrent vector such that $u(\reals^-)$ does not diverge through a cusp. Let $\delta>0$ and consider $\varepsilon>0$ given by Proposition \ref{distancelp}. We can assume $\varepsilon\le \delta$. 
		
		Any lift $\tilde u\in T^1 \H$ of $u$ is tangent to an oriented pair $(\tilde H, p)$ with $\ell (\tilde H, p)<\varepsilon/12$. Denote $\tilde v = \mathrm{Wind}_{(\tilde H, p )}(\tilde u)$, $\tau =\tau_{\tilde H, p ,\tilde u}$, $\tilde w = g_{\tau} \tilde v$ and let $w$ be the projection of $\tilde w$ to $T^1S$. By Proposition \ref{bound_wind}, we have 
		\[
		\min \{ d_1 (g_{t} \tilde w ,\, g_t\tilde u ),
		d_1(g_{t} \tilde w,\, p g_t\tilde u)
		\}
		\\ < \varepsilon, \quad \forall t\ge 0.
		\]
		Consider $\tilde y=[\tilde w , \tilde u]$. By Proposition \ref{distancelp}, we obtain
		\begin{align*}
			d_1(g_t \tilde w , g_t \tilde y ) <\delta, \quad \forall t\ge 0 ,\\
			d_1(g_t \tilde u , g_t \tilde y ) <\delta, \quad \forall t\le 0 . 
		\end{align*}
		Let $y$ be the projection of $\tilde y$ to $T^1S$. The previous estimates pass to the quotient and, after combining them, we obtain
		\[
		d_1(g_t u, g_t y) < 2\delta,\quad \forall t\in \reals.
		\]
		We have to show that the $g_t$-orbits of $u$ and $y$ are different. If this were not the case, there would be $\gamma\in \Gamma$ such that $\gamma \tilde u(+\infty) = \tilde y(+\infty )$ and $\gamma \tilde u(-\infty) = \tilde y(-\infty )$. Observe that $\tilde y (+\infty ) =\tilde w (+\infty ) = p \tilde u (+\infty)$ and $ \tilde y (-\infty)= \tilde u (-\infty)$. Then $\gamma^{-1} p $ fixes $\tilde u (+\infty)$. Since $u$ is cusp-recurrent, $\tilde u (+\infty)$ is only fixed by the identity, therefore $\gamma=p$. We deduce that $p$ fixes $\tilde u(-\infty)$, which is not true by hypothesis. Thus the assumption is false: $u$ and $y$ belong to distinct $g_t$-orbits.
		\end{proof}

We now show that we can find uncountably many distinct orbits contradicting the expansiveness, as an application of Theorem \ref{thm2}.

\begin{myproof}{Theorem}{\ref{thm1}} 
	Let $u\in T^1 S$ be a cusp-recurrent vector. 
	Let $\delta>0$. Our goal is to construct uncountably many vectors $v\in T^1 S$ in distinct $g_t$-orbits such that $d_1(g_t u, g_t v)< 2 \delta$ for all $t \in \reals$.
	
	Let $\varepsilon>0$ be the one given by Proposition \ref{distancelp} depending on $\delta>0$. Up to reducing $\varepsilon$, we can assume that $\varepsilon< \delta$. 
	
	By Corollaries \ref{cor1} and \ref{cor2}, there exists an uncountable set $\{w_i\}_{i\in I}$ of vectors $w_i\in W^{ss}u$ such that
	\begin{itemize}
		\item for all $i,j\in I, \, i\not = j $,
		\[
		\Gamma \tilde w_i (+\infty ) \cap \Gamma \tilde w_j (+\infty ) = \emptyset,
		\]
		for any lifts $\tilde w_i, \tilde w_j$ of $w_i, w_j$, respectively,
		
		\item and for all $i\in I$,
		\begin{equation}\label{distance_wu}
			d_1(g_ t w_i, g_t u)<\varepsilon, \quad \forall t\ge 0.
		\end{equation}
		
	\end{itemize}
	 
	Let $\tilde u$ be a lift of $u$. For each $i\in I$, let $\tilde w_i $ be a lift of $w_i$ such that $d_1(\tilde w_i, \tilde u)= d_1(w_i,u) <\varepsilon$. Let $\tilde v_i = [\tilde w_i , \tilde u]$. By Proposition \ref{distancelp}, we have 
	\begin{align*}
		d_1(g_t \tilde w_i , g_t \tilde v_i ) <\delta, \quad \forall t\ge 0 ,\\
		d_1(g_t \tilde u , g_t \tilde v_i ) <\delta, \quad \forall t\le 0 . 
	\end{align*}
	Denoting $v_i$ the projection of $\tilde v_i$ to $T^1 S$, the previous estimates pass to the quotient,
	\begin{align*}
		d_1(g_t w_i , g_t v_i ) <\delta, \quad \forall t\ge 0 ,\\
		d_1(g_t u , g_t v_i ) <\delta, \quad \forall t\le 0 . 
	\end{align*}
	Now, by the previous equations and \ref{distance_wu}, we obtain
	\[
	d_1(g_t u , g_t v_i ) \le d_1(g_t u , g_t w_i ) + d_1(g_t w_i , g_t v_i ) < \varepsilon + \delta \le 2\delta
	\]
	for all $t\ge 0$, and hence for all $t\in \reals$.
	
	Given $i,j\in I, \, i\not =j$, $\tilde v_i(+\infty) =\tilde w_i(+\infty)$ and $\tilde v_j(+\infty) =\tilde w_j(+\infty)$ are in distinct $\Gamma$-orbits, therefore $v_i$ and $v_j$ belong to distinct $g_t$ orbits in $T^1 S$.
	We have constructed an uncountable set $\{v_i\}_{i\in I}$ satisfying the desired properties.
\end{myproof}

\bibliographystyle{alpha}
\bibliography{general_library}

\end{document}